\newtheorem{example}[theorem]{Example}
\newcommand{\matR}{\mathbb{R}}
\newcommand{\matC}{\mathbb{C}}
\begin{document}

\title{
Structure-preserving Schur methods for computing square roots
of real skew-Hamiltonian matrices
\thanks{
The first author is supported by the
National Natural Science Foundations of China, N$^\text{o}.\,10771022$  and N$^\text{o}.\,10571012$;
Scientific Research Foundation for the Returned Overseas Chinese
Scholars, State Education Ministry. The research of the remaining authors is supported by
FEDER Funds through ``Programa Operacional Factores de Competitividade - COMPETE" and by Portuguese Funds through FCT - "Funda\c{c}\~{a}o para a Ci\^{e}ncia e a Tecnologia", within the Project PEst-CMAT/UI0013/2011.
}}

\author{Zhongyun Liu
      \thanks{School of Math., Changsha University of Science \& Technology, Hunan, 410076, China {(\tt liuzhongyun@263.net)}. Corresponding author.}
        \and
        Yulin Zhang \thanks{Centro de Matem\'{a}tica, Universidade do Minho, 4710-057 Braga, Portugal
        (\tt zhang@math.uminho.pt, caferrei@math.uminho.pt, r\_ralha@math.uminho.pt).}
        \and
        Carla Ferreira \footnotemark[3]
        \and
        Rui Ralha \footnotemark[3]
        }

\pagestyle{myheadings} \markboth{Z.\ Liu, Y.\ L.\ Zhang, C.\
Ferreira and R.\ Ralha}{Square roots of real skew-Hamiltonian matrices}

\maketitle

\begin{abstract}
Our contribution is two-folded. First, starting from the known fact that
every real skew-Hamiltonian matrix has a real Hamiltonian square root, we
give a complete characterization of the square roots of a real
skew-Hamiltonian matrix $W$. Second, we propose a structure-exploiting
method for computing square roots of $W$. Compared to the standard real
Schur method, which ignores the structure, our method requires significantly
less arithmetic.
\end{abstract}


\begin{keywords}
Matrix square root, skew-Hamiltonian Schur decomposition,
structure-preserving algorithm
\end{keywords}

\begin{AMS}
65F15, 65F30, 15A18
\end{AMS}


\section{Introduction}
Given  $A\in \matC^{n\times n}$, a matrix $X$ for which $X^2=A$ is
called a \textsl{square root} of $A$. The matrix square root is a
useful theoretical and computational tool, one of the most commonly
occurring matrix functions. See \cite{Gant77,Hig97,Highbook1,
HigMMT05,Highbook,Lancaster}.

The theory behind the existence of matrix square roots is nontrivial and the feature which complicates this
theory is that in general not all the square roots of a matrix $A$ are functions of $A$. See \cite{CroL74, Lancaster}.

It is well known that certain matrix structures can be inherited by
the square root. For example, a symmetric positive (semi)definite
matrix has a unique symmetric positive (semi)definite square root \cite{HornJohn91}.
The square roots of a centrosymmetric matrix are also centrosymmetric \cite{Liuzhr07}. A nonsingular
$M$-matrix has exactly one $M$-matrix as a square root. For an
$H$-matrix with positive diagonal elements there exists one and
only one square root which is also an $H$-matrix with positive
diagonal elements \cite{LinL01}. The principal square root of a
centrosymmetric $H$-matrix with positive diagonal elements
is a unique centrosymmetric $H$-matrix with positive diagonal entries \cite%
{LiuCC05b}. Any real skew-Hamiltonian matrix has a real Hamiltonian square root
\cite{FasMMX99}. In this paper we characterize such square roots.

For general matrices, an attractive method which uses the Schur decomposition is described by Bj\"orck and Hammarling \cite{BjoH83}
but may require complex arithmetic. Higham \cite{Hig87} presented a modification of this method which
enables real arithmetic to be used throughout when computing a real square root of a real matrix.
This method has been extended to compute matrix $p$th roots \cite{Smith03} and general matrix functions \cite{DavHig03}.

It is a basic tenet in numerical analysis that structure should be
exploited allowing, in general, the development of faster and/or
more accurate algorithms \cite{BBMehrmann93, MMT061}. We propose a
structure-exploiting method for computing square roots of a real
skew-Hamiltonian matrix $W$ which uses the real skew-Hamiltonian
Schur decomposition and requires significantly less arithmetic.

We give some basic definitions and establish notation in Section 2.
A description of the real Schur method and some results concerning
the existence of real square roots are presented in Section 3. In
Section \ref{sectionsqrtskew} we characterize the square roots of a
nonsingular  $W$ in a manner which makes clear the distinction
between the square roots which are functions of $W$ and those which
are not.  In Section \ref{AlgorithmSkew} we present our algorithms
for the computation of skew-Hamiltonian and Hamiltonian square
roots. In Section \ref{sectionExamples} we give results of numerical
experiments.

\section{Definitions and preliminaries results}

\subsection{Square roots of a nonsingular matrix}
Given a scalar function $f$ and a matrix $A\in \matC^{n\times n}$ there are many different ways
to define $f(A)$, a matrix of the same dimension of $A$, providing a useful generalization of a function
of a scalar variable.

It is a standard result that any matrix $A\in \matC^{n\times n}$ can be expressed in the Jordan canonical form
\begin{align}
\label{JordanForm}
Z^{-1}AZ=J=\diag(J_1,J_2,\ldots,J_p),\\
J_k=J_k(\lambda_k)=\begin{bmatrix}
\lambda_k & 1 \\
          &\lambda_k & \ddots\\
          &          & \ddots & 1\\
          &          &        &\lambda_k
\end{bmatrix} \in \matC^{m_k\times m_k}
\label{JordanBlock}
\end{align}
where $Z$ is nonsingular and $m_1+m_2+\cdots+m_p=n$. The Jordan matrix $J$ is unique up to the ordering of the blocks $J_i$.

Denote by $\lambda_1,\ldots,\lambda_s$ the $s$ distinct eigenvalues of $A$ and let $n_i$ be the order of the largest Jordan block in which $\lambda_i$
appears. The function $f$ is said to be \textsl{defined on the spectrum of} $A$ if the values
\[
f^{(j)}(\lambda_i), \quad j=0,\ldots,n_i-1, \quad i=1,\ldots,s,
\]
exist. These are called the \textsl{values of the function $f$ on the spectrum of $A$}.

The following definition of matrix function defines $f(A)$ to be a polynomial in the matrix $A$ completely determined by the values of $f$ on the spectrum of $A$. See \cite[p. 407 ff.]{Hig87}.

\begin{definition}
[matrix function via Hermite interpolation]
\label{DefFunM}
{\rm Let $f$ be defined on the spectrum of $A\in \matC^{n\times n}$. Then
$$
f(A):=p(A)
$$
where $p$ is the polynomial of degree less than $\sum_1^s n_i$ which satisfies the interpolation conditions
\[
p^{(j)}(\lambda_i)=f^{(j)}(\lambda_i),\qquad j=0,\ldots,n_i-1, \quad i=1,\ldots,s.
\]
There is a unique such $p$ and it is known as the Hermite interpolating polynomial.
}
\end{definition}

Of particular interest here is the function $f(z)=z^{1/2}$ which is certainly defined on the spectrum of $A$ if $A$ is nonsingular.
However, the square root function of $A$, $f(A)$, is not uniquely defined until one specifies which branch of the square root is to be taken in the neighborhood of each eigenvalue $\lambda_i$. Indeed, Definition \ref{DefFunM} yields a total of $2^s$ matrices $f(A)$ when all combinations of branches for the square roots $f(\lambda_i)$, $i=1,\ldots,s$, are taken. It is natural to ask whether these matrices are in fact square roots of $A$, that is, do we have $f(A)f(A)=A$? Indeed, these matrices, which are polynomials in $A$ by definition, are square roots of $A$.
See \cite{Highbook1,Lancaster}. However, these square roots are not necessarily all the square roots of $A$.

To classify all the square roots of a nonsingular matrix $A\in \matC^{n \times n}$ we need the following result concerning the square roots of a Jordan block.

\begin{lemma}
\label{Lemma1}
For $\lambda_k\neq 0$ the Jordan block $J_k(\lambda_k)$ in {\rm (\ref{JordanBlock})} has precisely two upper triangular square roots
\begin{equation} \label{JordanBlockSqr}
L_k^{(j)}=L_k^{(j)}(\lambda_k)=
\begin{bmatrix}
f(\lambda_k) & f^\prime(\lambda_k) &  \cdot &\ldots & \frac{f^{(m_k-1)}(\lambda_k)}{(m_k-1)!}\\
             & f(\lambda_k) & f^\prime(\lambda_k) & \ldots & \frac{f^{(m_k-2)}(\lambda_k)}{(m_k-2)!}\\
             & & \ddots &\ddots &\vdots\\
             & & & f(\lambda_k) & f^\prime(\lambda_k)\\
             & & &  &f(\lambda_k)
\end{bmatrix}, \quad j=1,2,
\end{equation}
where $f(\lambda)=\lambda^{1/2}$ and the superscript $j$ denotes the branch of the square root in the neighborhood of $\lambda_k$.
Both square roots are functions of $J_k$.
\end{lemma}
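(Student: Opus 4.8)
The plan is to strip an upper triangular square root $X$ of $J_k$ down to its structure in two stages --- first forcing its diagonal, then forcing its strictly upper triangular part --- and finally to recognize the two surviving matrices as the closed form in (\ref{JordanBlockSqr}). Throughout I would write $J_k = \lambda_k I + N$, with $N$ the nilpotent shift block carrying $1$'s on the first superdiagonal, so that $N^{m_k} = 0$.

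For the diagonal: if $X = (x_{ij})$ is upper triangular with $X^2 = J_k$, then equating $(i,i)$ entries gives $x_{ii}^2 = \lambda_k$, so each $x_{ii} \in \{\sqrt{\lambda_k}, -\sqrt{\lambda_k}\}$, and equating $(i,i+1)$ entries gives $(x_{ii}+x_{i+1,i+1})\,x_{i,i+1} = 1$, whence $x_{ii}+x_{i+1,i+1}\neq 0$; since $\lambda_k\neq 0$, two elements of $\{\pm\sqrt{\lambda_k}\}$ with nonzero sum must coincide, so all $x_{ii}$ share a common value $d$, with $d = \sqrt{\lambda_k}$ or $d = -\sqrt{\lambda_k}$ --- one value per branch. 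Writing $X = dI + U$ with $U$ strictly upper triangular, the equation becomes $2d\,U = N - U^2$. Reading this off the $r$th superdiagonal for $r = 1, 2, \ldots, m_k-1$, the $(i,i+r)$ entry of $U^2$ involves only entries of $U$ on superdiagonals $1$ through $r-1$, so an induction on $r$ shows that each superdiagonal of $U$ is uniquely determined --- one divides by $2d\neq 0$ --- and constant along the diagonal. Hence $X$ is an upper triangular Toeplitz matrix determined by $d$ alone, so there are at most two upper triangular square roots. (One could instead note that $X$ commutes with $X^2 = J_k$ and that the centralizer of the nonderogatory matrix $J_k$ is exactly the set of polynomials in $J_k$, i.e.\ the upper triangular Toeplitz matrices, and then invoke the diagonal count.)

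To identify the two candidates I would use $f(z) = z^{1/2}$ with the branch normalized by $f(\lambda_k) = d$: the Taylor expansion of a function on a Jordan block, $f(J_k) = \sum_{\ell=0}^{m_k-1}\frac{f^{(\ell)}(\lambda_k)}{\ell!}\,N^\ell$, is finite because $N^{m_k} = 0$ and is exactly the matrix $L_k^{(j)}$ of (\ref{JordanBlockSqr}); multiplicativity of the functional calculus together with $f(z)^2 = z$ gives $(L_k^{(j)})^2 = J_k$ (this is also recorded in the discussion preceding Definition \ref{DefFunM}), and $L_k^{(1)}\neq L_k^{(2)}$ since their $(1,1)$ entries $\pm\sqrt{\lambda_k}$ differ. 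With the ``at most two'' count this shows $\{L_k^{(1)}, L_k^{(2)}\}$ is precisely the set of upper triangular square roots of $J_k$; and each $L_k^{(j)} = f(J_k)$ is a function of $J_k$ --- equivalently a polynomial in $J_k$, as $N = J_k - \lambda_k I$ --- which is the last assertion.

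The argument is elementary; the one delicate point, and the sole place the hypothesis $\lambda_k\neq 0$ is used, is the step forcing the diagonal of $X$ to be constant and then propagating through the superdiagonals, every instance of which amounts to dividing by $2d = \pm 2\sqrt{\lambda_k}$. (For $\lambda_k = 0$ the statement genuinely fails: a $2\times2$ nilpotent Jordan block has no square root.) Matching the recursively determined Toeplitz entries against the Hermite/Taylor closed form is then routine bookkeeping.
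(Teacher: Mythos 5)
Your argument is correct, but note that the paper does not actually prove Lemma \ref{Lemma1}: it quotes it as a classical fact, with proofs delegated to \cite{Gant77,Hig87}, so there is no internal proof to match against. Your route is the standard elementary one and it is sound: equating diagonal and first-superdiagonal entries forces a constant diagonal $d=\pm\lambda_k^{1/2}$ (the hypothesis $\lambda_k\neq 0$ enters exactly where you say it does), and the recursion $2dU=N-U^2$ read superdiagonal by superdiagonal pins down the strictly upper triangular part uniquely for each choice of $d$, giving ``at most two''; existence and the closed form then come from $f(J_k)=\sum_{\ell=0}^{m_k-1}\frac{f^{(\ell)}(\lambda_k)}{\ell!}N^{\ell}$, which is a polynomial in $J_k$ and hence settles the last assertion as well. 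The only place you lean on outside machinery is the step $\bigl(L_k^{(j)}\bigr)^2=J_k$ via multiplicativity of the functional calculus (or the paper's remark before Definition \ref{DefFunM}); if you want the proof fully self-contained, observe that squaring the finite Taylor sum produces on $N^m$ the Cauchy product $\sum_{\ell=0}^{m}\frac{f^{(\ell)}(\lambda_k)}{\ell!}\frac{f^{(m-\ell)}(\lambda_k)}{(m-\ell)!}$, which is the $m$th Taylor coefficient of $f(z)^2=z$ at $\lambda_k$, i.e.\ $\lambda_k$, $1$, $0,\dots$, so the square is $\lambda_k I+N=J_k$; alternatively, your Toeplitz uniqueness argument already shows the recursively constructed $X$ for each $d$ is a square root, so the explicit identification is indeed just the bookkeeping you describe. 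The aside about $\lambda_k=0$ (no square root of $J_2(0)$) is also correct.
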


We will restrict our attention to matrices with real entries
and to investigate the real square roots of a real matrix we need to understand the structure of a general complex square root.
The following results allow us to obtain a useful characterisation of the square roots of a nonsingular matrix $A \in \matC^{n \times n}$ which are functions of $A$. See \cite{Gant77,Hig87}.

\begin{theorem} \label{Theorem01}
Let $A \in \matC^{n \times n}$ be nonsingular and have the Jordan canonical form {\rm (\ref{JordanBlock}).} Then all square roots $X$ of $A$ are given by
\[
X=ZU\diag\left(L_1^{(j_1)},L_2^{(j_2)},\ldots,L_p^{(j_p)}\right)U^{-1}Z^{-1},
\]
where $j_k=1$  or $j_k=2$ and $U$ is an arbitrary nonsingular matrix which commutes with $J$.
\end{theorem}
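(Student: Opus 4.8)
The plan is the classical one (cf.\ \cite{Gant77,Hig87}): transfer everything to the Jordan matrix $J$ and exploit that a square root of $J$ automatically commutes with $J$. First I would observe that $X^{2}=A$ implies $XA=X\cdot X^{2}=X^{2}\cdot X=AX$, so $Y:=Z^{-1}XZ$ satisfies $Y^{2}=J$ and $YJ=JY$, while $X=ZYZ^{-1}$; hence it suffices to describe all $Y$ with $Y^{2}=J$. One inclusion is then immediate: if $U$ is nonsingular with $UJ=JU$, then by Lemma~\ref{Lemma1}
$\bigl(U\diag(L_{1}^{(j_{1})},\ldots,L_{p}^{(j_{p})})U^{-1}\bigr)^{2}=U\diag(J_{1},\ldots,J_{p})U^{-1}=J$,
so $ZU\diag(L_{1}^{(j_{1})},\ldots,L_{p}^{(j_{p})})U^{-1}Z^{-1}$ is a square root of $A$. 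The substance of the theorem is the converse: every square root has this form.

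So fix $Y$ with $Y^{2}=J$. Since $A$, hence $J$, is nonsingular, its distinct eigenvalues $\lambda^{(1)},\ldots,\lambda^{(s)}$ are nonzero; fix a square root $\mu^{(i)}$ of each. Every eigenvalue of $Y$ squares to some $\lambda^{(i)}$, hence belongs to $\{\pm\mu^{(i)}\}$, a set of $2s$ pairwise distinct nonzero numbers ($\mu^{(i)}=\pm\mu^{(l)}$ would force $\lambda^{(i)}=\lambda^{(l)}$). For $\nu\in\{\pm\mu^{(i)}\}$ put $V_{\nu}=\ker(Y-\nu I)^{n}$; then $\matC^{n}=\bigoplus_{\nu}V_{\nu}$, and since $J=Y^{2}$ commutes with $Y$, each $V_{\nu}$ is invariant under both $Y$ and $J$, with $Y$ having the single eigenvalue $\nu$ and $J$ the single eigenvalue $\nu^{2}$ on $V_{\nu}$. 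Moreover $V_{\mu^{(i)}}\oplus V_{-\mu^{(i)}}$ is exactly the generalized $\lambda^{(i)}$-eigenspace of $J$, so the Jordan blocks appearing in $J|_{V_{\mu^{(i)}}}$ and in $J|_{V_{-\mu^{(i)}}}$, collected over all $i$, are precisely $J_{1},\ldots,J_{p}$.

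The key step — and, I expect, the main obstacle — is a local uniqueness statement: if $B$ has a single nonzero eigenvalue $\lambda$ and $C^{2}=B$ with $C-\mu I$ nilpotent (where $\mu^{2}=\lambda$), then $C$ is the primary square root $f(B)$ of Definition~\ref{DefFunM} for the branch with $f(\lambda)=\mu$. I would prove it by writing $C=\mu(I+M)$ and $f(B)=\mu(I+M')$ with $M,M'$ nilpotent; $C$ commutes with $B$ and $f(B)$ is a polynomial in $B$, so $M$ and $M'$ commute, while $C^{2}=B=f(B)^{2}$ gives $2M+M^{2}=\lambda^{-1}(B-\lambda I)=2M'+(M')^{2}$, hence $(M-M')(2I+M+M')=0$; since $M+M'$ is nilpotent the factor $2I+M+M'$ is invertible, so $M=M'$ and $C=f(B)$. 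Applied on each $V_{\nu}$ this gives $Y|_{V_{\nu}}=f(J|_{V_{\nu}})$, which — evaluated blockwise and using Lemma~\ref{Lemma1} — is a direct sum of matrices of the shape (\ref{JordanBlockSqr}).

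It remains to land in exactly the stated form. I would choose a Jordan basis of $J$ adapted to $\matC^{n}=\bigoplus_{\nu}V_{\nu}$ (a Jordan basis of $J|_{V_{\nu}}$ on each summand, concatenated); by the previous paragraph, in this basis $J$ becomes $\widetilde J=\diag(J_{\pi(1)},\ldots,J_{\pi(p)})$ and $Y$ becomes $\widetilde Y=\diag(L_{\pi(1)}^{(b_{1})},\ldots,L_{\pi(p)}^{(b_{p})})$ for a permutation $\pi$ of $\{1,\ldots,p\}$ and branch labels $b_{l}$, both read off from the $V_{\nu}$ in which each Jordan string lies. Let $P$ be the block-permutation matrix with $\widetilde J=P^{-1}JP$, and set $j_{\pi(l)}:=b_{l}$; then $\widetilde Y=P^{-1}\diag(L_{1}^{(j_{1})},\ldots,L_{p}^{(j_{p})})P$. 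If $R$ is the change-of-basis matrix, so that $R^{-1}JR=\widetilde J$ and $Y=R\widetilde YR^{-1}$, then $R^{-1}JR=P^{-1}JP$ shows that $U:=RP^{-1}$ commutes with $J$, and $Y=U\diag(L_{1}^{(j_{1})},\ldots,L_{p}^{(j_{p})})U^{-1}$; hence $X=ZYZ^{-1}$ has the required form. Apart from the local uniqueness lemma, all that remains is routine bookkeeping with the block permutation $P$.
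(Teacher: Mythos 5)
Your proof is correct and complete in both directions (every matrix of the stated form squares to $A$, and every square root of $A$ arises this way). The paper itself does not prove Theorem \ref{Theorem01} --- it simply cites Gantmacher and Higham --- and your argument (pass to $Y=Z^{-1}XZ$, split $\matC^{n}$ into the generalized eigenspaces of $Y$, prove the local uniqueness lemma showing $Y$ restricted to each piece is the primary square root, then reassemble via a block permutation into $U\diag(L_1^{(j_1)},\ldots,L_p^{(j_p)})U^{-1}$ with $U$ commuting with $J$) is essentially the classical route taken in those references.
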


The following result extends Theorem \ref{Theorem01}.

\begin{theorem}\label{Theorem02}
Let the nonsingular matrix $A \in \matC^{n \times n}$ have the Jordan canonical form {\rm (\ref{JordanBlock})}
and let $s \leq p$ be the number of distinct eigenvalues of $A$. Then $A$ has precisely $2^s$ square roots which are functions of $A$, given by
\begin{equation}
X_j=Z\diag\left(L_1^{(j_1)},L_2^{(j_2)},\ldots,L_p^{(j_p)}\right)Z^{-1}, \quad j=1,\ldots,2^s,
\label{Equa1}
\end{equation}
corresponding to all possible choices of $j_1,\ldots,j_p$, $j_k=1$ or $j_k=2$, subject to the constraint that $j_i=j_k$ whenever $\lambda_i=\lambda_k$.

If $s<p$, $A$ has square roots which are not functions of $A$; they form parametrized families
\begin{equation}
X_j(U)=ZU\diag\left(L_1^{(j_1)},L_2^{(j_2)},\ldots,L_p^{(j_p)}\right)U^{-1}Z^{-1},\qquad j=2^s+1,\ldots,2^p,
\label{Equa2}
\end{equation}
where $j_k=1$  or $j_k=2$, $U$ is an arbitrary nonsingular matrix which commutes with $J$, and for each $j$ there exist
$i$ and $k$, depending on $j$, such that $\lambda_i=\lambda_k$ while $j_i\neq j_k$.
\end{theorem}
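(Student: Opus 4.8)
The plan is to derive the statement from Theorem \ref{Theorem01}, which already guarantees that every square root of $A$ has the form $X = ZU D U^{-1}Z^{-1}$ with $D = \diag\bigl(L_1^{(j_1)},\ldots,L_p^{(j_p)}\bigr)$, $j_k\in\{1,2\}$, and $U$ nonsingular commuting with $J$. What is left is to decide, for each branch pattern $(j_1,\ldots,j_p)\in\{1,2\}^p$, whether the corresponding $X$ is a function of $A$ in the sense of Definition \ref{DefFunM}, and then to count. First I would split the $2^p$ patterns into the $2^s$ \emph{consistent} ones (those with $j_i=j_k$ whenever $\lambda_i=\lambda_k$) and the $2^p-2^s$ \emph{inconsistent} ones, and treat the two classes separately. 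As a preliminary I would record the block structure of the matrices commuting with $J$: such a matrix is block diagonal across the groups of Jordan blocks belonging to distinct eigenvalues (blocks for distinct eigenvalues cannot be coupled, by Sylvester), while within a single group it has the classical commutant structure; both later steps rest on this.

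For a consistent pattern one branch of $z^{1/2}$ is selected at each distinct eigenvalue, so that selection defines a function $f$ on the spectrum of $A$; by Lemma \ref{Lemma1} the block $L_k^{(j_k)}$ is precisely $f(J_k)$, hence $D=f(J)$. By Definition \ref{DefFunM}, $f(J)=q(J)$ for the associated Hermite interpolating polynomial $q$, so $D$ is a polynomial in $J$ and therefore commutes with $U$; consequently $X=ZUDU^{-1}Z^{-1}=ZDZ^{-1}=q(A)=f(A)$, independent of $U$ and a function of $A$. These are exactly the matrices $X_j$ of {\rm (\ref{Equa1})}. Distinct consistent patterns give matrices of distinct spectra, since $+\lambda_i^{1/2}\neq-\lambda_i^{1/2}$ when $\lambda_i\neq0$, so there are precisely $2^s$ of them, all functions of $A$.

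For an inconsistent pattern choose $i,k$ with $\lambda_i=\lambda_k=:\mu$ but $j_i\neq j_k$. By the block structure of $U$, the part of $UDU^{-1}$ acting on the $\mu$-group is similar to $\diag\bigl(L_i^{(j_i)},L_k^{(j_k)},\ldots\bigr)$, whose eigenvalue multiset contains both $+\mu^{1/2}$ and $-\mu^{1/2}$. If $X$ equalled $g(A)$ for a polynomial $g$, then $UDU^{-1}=g(J)$ would be block diagonal with each block $g(J_m)$ upper triangular Toeplitz of constant diagonal $g(\lambda_m)$, so the eigenvalue multiset on the $\mu$-group would reduce to the single value $g(\mu)$ --- a contradiction. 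Hence no inconsistent pattern yields a function of $A$. On the other hand every such $X=ZUDU^{-1}Z^{-1}$ is a square root of $A$ by Theorem \ref{Theorem01}, and it genuinely depends on $U$: the part $D_\mu$ of $D$ acting on the $\mu$-group is not a polynomial in the corresponding part of $J$ (it carries at least two distinct constants on the diagonal blocks), so it fails to commute with some nonsingular element of that commutant, which yields a nonconstant family. Thus the $2^p-2^s$ inconsistent patterns produce precisely the parametrized families $X_j(U)$ of {\rm (\ref{Equa2})}, and, together with the $X_j$, Theorem \ref{Theorem01} shows these exhaust all square roots of $A$; in particular if $s=p$ every pattern is consistent and $A$ has no square roots other than functions of $A$.

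I expect the main obstacle to be the converse half of the second issue: proving that a consistent branch pattern is \emph{necessary} for $X$ to be a function of $A$. One must exclude the possibility that a cleverly chosen $U$ conjugates an inconsistent $D$, inside the commutant of $J$, onto a polynomial in $J$; the eigenvalue-multiset argument carried out group by group is what closes this gap, and it is exactly where the commutant block structure recorded at the outset is needed.
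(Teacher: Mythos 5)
Your argument is correct and follows essentially the route the paper itself only sketches: the paper derives (\ref{Equa1}) from $f(A)=Zf(J)Z^{-1}=Z\diag\big(f(J_k)\big)Z^{-1}$ together with Lemma \ref{Lemma1} and the branch constraint from Definition \ref{DefFunM}, and defers the full proof of both parts to \cite{Gant77}. Your write-up supplies exactly the details that citation covers --- the block structure of the commutant of $J$, the observation that a consistent branch pattern makes $D$ a polynomial in $J$ so that $U$ drops out, and the eigenvalue-multiset argument (plus the Sylvester nonvanishing argument for genuine dependence on $U$) showing inconsistent patterns cannot be functions of $A$ --- and these steps are sound.
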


Proofs of these theorems and a description of the structure of the matrix $U$ can be found in \cite{Gant77}.
Note that formula in (\ref{Equa1}) follows from the fact that all square roots of $A$ which are functions of $A$
have the form
\[
f(A)=f(ZJZ^{-1})=Zf(J)Z^{-1}=Z\diag\big(f(J_k)\big)Z^{-1},
\]
and from Lemma \ref{Lemma1}.
The constrain on the branches $\{j_i\}$ follow from Definition \ref{DefFunM}.
The remaining square roots of $A$ (if any), which cannot be functions of $A$, are given by (\ref{Equa2}).

Theorem \ref{Theorem02} shows that the square roots of $A$ which are functions of $A$ are ``isolated" square roots, characterized by the fact that
the sum of any two of their eigenvalues is nonzero. On the other hand, the square roots which are not functions of $A$ form a finite number of
parametrized families of matrices: each family contains infinitely many square roots which share the same spectrum.

Some interesting corollaries follow directly from Theorem \ref{Theorem02}.

\begin{corollary}
If $\lambda_k\neq 0$, the two square roots of $J_k(\lambda_k)$ given in  {\rm Lemma \ref{Lemma1}} are the only square roots of
$J_k(\lambda_k)$.
\end{corollary}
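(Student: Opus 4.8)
The plan is to derive the corollary as the special case of Theorem \ref{Theorem02} in which $A = J_k(\lambda_k)$ is a single Jordan block. Since $\lambda_k \neq 0$, the matrix $J_k(\lambda_k)$ is nonsingular, so Theorem \ref{Theorem02} applies. Here the Jordan canonical form of $A$ is $A$ itself (with $Z = I$), there is $p = 1$ Jordan block, and hence $s = 1$ distinct eigenvalue. I would first observe that because $s = p = 1$, the second part of Theorem \ref{Theorem02} --- the parametrized families (\ref{Equa2}) of square roots that are not functions of $A$ --- is vacuous: there is no pair of indices $i, k$ with $\lambda_i = \lambda_k$ but $j_i \neq j_k$. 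Therefore \emph{every} square root of $J_k(\lambda_k)$ is a function of $J_k(\lambda_k)$.

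Next I would invoke the first part of Theorem \ref{Theorem02}: $A$ has precisely $2^s = 2^1 = 2$ square roots which are functions of $A$, given by (\ref{Equa1}). With $Z = I$ and a single block, these are exactly $L_k^{(1)}(\lambda_k)$ and $L_k^{(2)}(\lambda_k)$ from Lemma \ref{Lemma1}. Combining this with the previous paragraph, $J_k(\lambda_k)$ has exactly two square roots in total, and they are the two matrices exhibited in Lemma \ref{Lemma1}. That establishes the claim.

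There is essentially no obstacle here --- the corollary is a direct specialization, and the only point worth stating carefully is why the ``non-function'' families collapse when $s = p$. One small sanity check I would include is that the two square roots $L_k^{(1)}$ and $L_k^{(2)}$ are genuinely distinct: since $\lambda_k \neq 0$, the two branches of $\lambda_k^{1/2}$ differ in sign, so the diagonal entries $f(\lambda_k)$ of $L_k^{(1)}$ and $L_k^{(2)}$ are negatives of each other and in particular unequal. This confirms that the count of two is exact rather than an overcount. No further computation is needed.
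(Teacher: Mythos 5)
Your proposal is correct and matches the paper's intent: the paper offers no separate proof, stating only that this corollary ``follows directly from Theorem~\ref{Theorem02}'', and your argument is precisely that direct specialization ($Z=I$, $p=s=1$, so the non-function families in (\ref{Equa2}) are vacuous and (\ref{Equa1}) yields exactly $L_k^{(1)}$ and $L_k^{(2)}$). The added check that the two roots are distinct because their diagonal entries differ in sign is a harmless and sensible refinement.
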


\begin{corollary}\label{Corolary1}
If $A \in \matC^{n\times n}$ is nonsingular and in its Jordan canonical form {\rm (\ref{JordanBlock})} each eigenvalue appears
in only one Jordan block, then $A$ has precisely $2^p$ square roots, each of which is a function of $A$.
\end{corollary}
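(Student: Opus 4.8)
The plan is to derive this directly from Theorem \ref{Theorem02}. Under the hypothesis, each of the $p$ Jordan blocks carries a distinct eigenvalue, so the number of distinct eigenvalues satisfies $s = p$. First I would invoke the first part of Theorem \ref{Theorem02}: since $A$ is nonsingular, it has precisely $2^s = 2^p$ square roots that are functions of $A$, namely the matrices $X_j = Z\,\diag\!\left(L_1^{(j_1)},\ldots,L_p^{(j_p)}\right)Z^{-1}$ from (\ref{Equa1}), where each $j_k \in \{1,2\}$ is chosen freely — the constraint "$j_i = j_k$ whenever $\lambda_i = \lambda_k$" is vacuous because no two blocks share an eigenvalue, so all $2^p$ sign patterns are admissible.

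Next I would argue that there are no further square roots. The second part of Theorem \ref{Theorem02} produces the square roots that are not functions of $A$ only in the case $s < p$; since here $s = p$, that family is empty, so every square root of $A$ is one of the $2^p$ matrices in (\ref{Equa1}). Finally I would check that these $2^p$ matrices are genuinely distinct: two choices $(j_1,\ldots,j_p)$ and $(j_1',\ldots,j_p')$ yield the same $X_j$ only if $\diag(L_k^{(j_k)}) = \diag(L_k^{(j_k')})$, and comparing the $(\lambda_k)^{1/2}$ diagonal entries block by block, using $\lambda_k \neq 0$ and Lemma \ref{Lemma1} (the two roots $L_k^{(1)}, L_k^{(2)}$ of a nonzero Jordan block are distinct, differing in sign on the diagonal), forces $j_k = j_k'$ for every $k$. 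Hence $A$ has exactly $2^p$ square roots, each a function of $A$.

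I do not anticipate a genuine obstacle here, since the statement is essentially a specialization of Theorem \ref{Theorem02}; the only point requiring a word of care is the distinctness count, which rests on $\lambda_k \neq 0$ (guaranteed by nonsingularity of $A$) so that the two branch choices for each block really do give different diagonal entries and cannot be identified by the similarity transformation $Z(\cdot)Z^{-1}$.
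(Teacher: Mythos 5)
Your proposal is correct and follows essentially the same route as the paper, which derives this corollary directly from Theorem \ref{Theorem02} by noting that $s=p$ makes the branch constraint vacuous and eliminates the non-function square roots of (\ref{Equa2}). Your additional check that the $2^p$ choices yield distinct square roots (via $\lambda_k\neq 0$ and Lemma \ref{Lemma1}) is a sound, if brief, supplement to what the paper leaves implicit.
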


The final corollary is well known.
\begin{corollary}
Every Hermitian positive definite matrix has a unique Hermitian positive definite square root.
\end{corollary}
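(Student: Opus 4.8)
The plan is to reduce the statement to the spectral decomposition of the matrix and then read the conclusion off Theorem~\ref{Theorem02}. If $A\in\matC^{n\times n}$ is Hermitian positive definite, it is diagonalizable and its eigenvalues $\lambda_1,\ldots,\lambda_n$ are real and strictly positive; by the spectral theorem we may write $A=QDQ^{*}$ with $Q$ unitary and $D=\diag(\lambda_1,\ldots,\lambda_n)$. In particular every Jordan block of $A$ is $1\times1$, so in the notation of Lemma~\ref{Lemma1} each factor $L_k^{(j)}$ collapses to the scalar $\pm\sqrt{\lambda_k}$, and every square root of $A$ produced by Theorem~\ref{Theorem02} is similar to a diagonal matrix $\diag(\varepsilon_1\sqrt{\lambda_1},\ldots,\varepsilon_n\sqrt{\lambda_n})$ with signs $\varepsilon_k\in\{+1,-1\}$.

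For existence I would exhibit the obvious candidate, $X_0:=Q\,\diag(\sqrt{\lambda_1},\ldots,\sqrt{\lambda_n})\,Q^{*}$, taking the positive square root in each entry. Then $X_0^{*}=X_0$, its eigenvalues $\sqrt{\lambda_i}$ are positive, and $X_0^{2}=QDQ^{*}=A$, so $X_0$ is a Hermitian positive definite square root of $A$. This is the square root $X_j$ of (\ref{Equa1}) obtained by choosing the principal branch at every eigenvalue; it coincides with $f(A)$ for the Hermite interpolant $f$ of $z\mapsto z^{1/2}$ at $\lambda_1,\ldots,\lambda_n$, hence does not depend on the chosen diagonalizing matrix.

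For uniqueness, let $X$ be an arbitrary Hermitian positive definite square root of $A$. By Theorem~\ref{Theorem02}, $X$ is one of the matrices in (\ref{Equa1}) or (\ref{Equa2}), so by the remark above its spectrum is contained in $\{\pm\sqrt{\lambda_1},\ldots,\pm\sqrt{\lambda_n}\}$; since $X$ is Hermitian positive definite all of its eigenvalues are positive, which forces $\varepsilon_k=+1$ for every $k$. This already excludes the parametrized families (\ref{Equa2}), because each of those requires $j_i\neq j_k$ --- hence a sign change $\varepsilon_i=-\varepsilon_k$ --- for some pair of indices with $\lambda_i=\lambda_k$. Therefore $X$ must be the member of (\ref{Equa1}) with the principal branch chosen at every eigenvalue, that is, $X=X_0$, which proves uniqueness.

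The only genuinely delicate point is the elimination of (\ref{Equa2}): one has to be sure that positive definiteness is restrictive enough to kill the \emph{continuum} of non-isolated square roots, and not merely to single out one of the finitely many functions of $A$. The eigenvalue argument above does this cleanly --- a non-function square root always splits some repeated eigenvalue $\lambda>0$ of $A$ into both $+\sqrt{\lambda}$ and $-\sqrt{\lambda}$, and the negative value contradicts positive definiteness. Everything else is routine bookkeeping with the $1\times1$ Jordan structure supplied by the spectral theorem; one may alternatively bypass Theorem~\ref{Theorem02} altogether by diagonalizing $X$ as $PMP^{*}$ and noting that $A=PM^{2}P^{*}$ forces $M$, and hence $X$, to act as multiplication by $\sqrt{\lambda}$ on each eigenspace of $A$.
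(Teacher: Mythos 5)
Your proof is correct and takes exactly the route the paper intends: the paper states this corollary without proof as one of the ``interesting corollaries [that] follow directly from Theorem~\ref{Theorem02}'', and your argument --- spectral theorem giving $1\times1$ Jordan blocks, positivity of the eigenvalues of $X$ forcing the principal branch in (\ref{Equa1}) and excluding the parametrized families (\ref{Equa2}) because a non-function square root must carry some eigenvalue $-\sqrt{\lambda_i}<0$ --- is precisely that derivation. The delicate point you flag (eliminating the continuum of non-isolated square roots) is handled correctly, so nothing is missing.
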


\subsection{Hamiltonian and skew-Hamiltonian matrices}
\label{HamiltSkew}
Hamiltonian and skew-Hamiltonian matrices have properties that follow directly from the definition.

\begin{definition}
\label{Defn1}
{\rm
Let $J=\left [%
\begin{array}{cc}
0 & I \\
-I & 0%
\end{array}%
\right ]$, where $I$ is the identity matrix of order $n$.

\begin{enumerate}
\item[(1)] A matrix $H\in\mathbb{R}^{2n\times 2n}$ is said to be
\emph{Hamiltonian} if $HJ=(HJ)^T$.\break Equivalently,
$H$ can be partitioned as
\begin{equation}
H=\begin{bmatrix}
A & G \\
F & -A^T%
\end{bmatrix}, \quad G=G^T, \quad F=F^T, \quad A, G, F \in \matR^{n\times n}.
\end{equation}

\item[(2)] A matrix $W\in\mathbb{R}^{2n\times 2n}$ is said to be \emph{%
skew-Hamiltonian} if \, $WJ=-(WJ)^T$.
Likewise, $W$ can be partitioned as
\begin{equation}  \label{skewh}
W=\left [%
\begin{array}{cc}
A & G \\
F & A^T%
\end{array}%
\right ], \quad G=-G^T, \quad F=-F^T,\quad A, G, F \in \matR^{n\times n}.
\end{equation}
\end{enumerate}
}
\end{definition}

These matrix structures induce particular spectral properties for $H$ and $W$.
Notably, the eigenvalues of $H$ are symmetric with respect to the imaginary axis and the eigenvalues
of $W$ have even algebraic and geometric multiplicities.

\begin{definition}$\,$
{\rm
\begin{enumerate}
\item[(1)] A matrix $S\in\mathbb{R}^{2n\times 2n}$ is said to be \emph{%
symplectic} if $SJ S^T= J$.

\item[(2)] A matrix $U\in\mathbb{R}^{2n\times 2n}$ is said to be \emph{%
orthogonal-symplectic} if $U$ is\break orthogonal and
symplectic. Any matrix belonging to this group can be partitioned as
\[
U=\begin{bmatrix}
U_{1} & U_{2} \\
-U_{2} & U_{1}%
\end{bmatrix}
\]
where $U_{i}\in \mathbb{R}^{n\times n},\, i=1,2.$
\end{enumerate}
}
\end{definition}

Hamiltonian and skew-Hamiltonian structures are preserved if symplectic similarity
transformations are used; if H is Hamiltonian (skew-Hamiltonian) and $S$ is symplectic, then
$S^{-1}HS$ is also Hamiltonian (skew-Hamiltonian).
In the interest of numerical stability the similarities should be orthogonal as well.


The first simplifying reduction of a skew-Hamiltonian matrix was introduced
by Van Loan in \cite{Loan84}. But first we recall the real Schur decomposition \cite{GolVL96}.

\begin{theorem}
[real Schur form]
\label{Theorem05}
If $A\in \matR^{n\times n}$, then there exists a real orthogonal matrix $Q$ such that
\begin{equation}
Q^TAQ=R=\begin{bmatrix}
R_{11} & R_{12} & \ldots & R_{1m}\\
       & R_{22} & \ldots & R_{2m}\\
       &        & \ddots & \vdots\\
       &        &        & R_{mm}
\end{bmatrix}\in \matR^{n\times n},
\label{RealSchur}
\end{equation}
where each block $R_{ii}$ is either $1\times 1$ or $2 \times 2$ with complex conjugate eigenvalues
$\lambda_i$ and $\bar{\lambda}_i$, $\lambda_i\neq \bar{\lambda}_i$ ($R$ is in quasi-upper triangular form).
\end{theorem}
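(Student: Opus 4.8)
The plan is to proceed by induction on $n$, deflating either one or two dimensions at a time according to whether the eigenvalue we exploit is real or belongs to a complex conjugate pair. For $n=1$ there is nothing to prove, and for $n=2$ either $A$ has two real eigenvalues, in which case a single Householder reflection built from a real unit eigenvector triangularizes $A$, or $A$ has a complex conjugate pair $\lambda,\bar\lambda$ with $\lambda\neq\bar\lambda$, and then $A$ itself is already of the required form with $m=1$ and $R_{11}=A$.

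For the inductive step, suppose the result holds for all real matrices of order less than $n$ and let $A\in\matR^{n\times n}$. Since the characteristic polynomial of $A$ has real coefficients, $A$ has either a real eigenvalue or a non-real eigenvalue whose conjugate is also an eigenvalue. In the first case, pick a real unit eigenvector $q_1$, extend it to a real orthogonal matrix $Q_1=[\,q_1\mid \widehat{Q}_1\,]$, and observe that $Q_1^TAQ_1=\begin{bmatrix}\lambda & r^T\\ 0 & A_1\end{bmatrix}$ for some $A_1\in\matR^{(n-1)\times(n-1)}$; then apply the inductive hypothesis to $A_1$.

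In the second case, write the eigenvalue as $\lambda=\alpha+i\beta$ with $\beta\neq 0$ and a corresponding eigenvector as $x+iy$ with $x,y\in\matR^n$. Separating real and imaginary parts of $A(x+iy)=\lambda(x+iy)$ shows that the real subspace $\mathrm{span}\{x,y\}$ is $A$-invariant; moreover $x$ and $y$ are linearly independent, since a dependence would produce a real eigenvector and force $\beta=0$. Applying Gram--Schmidt to $\{x,y\}$ yields an orthonormal basis $\{q_1,q_2\}$ of this subspace; extend it to a real orthogonal $Q_1=[\,q_1\mid q_2\mid\widehat{Q}_1\,]$, so that $Q_1^TAQ_1=\begin{bmatrix}R_{11} & \star\\ 0 & A_1\end{bmatrix}$ with $R_{11}\in\matR^{2\times 2}$ and $A_1\in\matR^{(n-2)\times(n-2)}$. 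Since $R_{11}$ represents the restriction of $A$ to a two-dimensional invariant subspace containing no real eigenvector, its eigenvalues are exactly $\lambda$ and $\bar\lambda$, so it is an admissible $2\times 2$ block; now invoke the inductive hypothesis on $A_1$.

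Finally, in either case one assembles the quasi-upper triangular form by conjugating with $Q_1\,\mathrm{diag}(I,Q_2)$, where $Q_2$ is the orthogonal matrix furnished by induction; this product is again real orthogonal, which closes the induction. The only point requiring genuine care is the complex-eigenvalue case: one must check that $x,y$ are independent, so that the invariant subspace is truly two-dimensional, and that the resulting $2\times 2$ block inherits precisely the conjugate pair $\lambda,\bar\lambda$ rather than splitting into two real eigenvalues; both facts follow from $\beta\neq 0$.
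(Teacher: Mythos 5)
Your inductive deflation argument is correct: the real-eigenvalue case deflates by one, the complex case uses the $A$-invariant real plane $\mathrm{span}\{x,y\}$ (with independence of $x,y$ forced by $\beta\neq 0$, and the eigenvalues of the resulting $2\times 2$ block being $\lambda,\bar\lambda$ because $x+iy$ lies in the complexification of that plane), and the pieces are assembled with $Q_1\,\mathrm{diag}(I,Q_2)$. The paper gives no proof of this classical result, merely citing Golub and Van Loan, and your argument is precisely the standard proof found there, so there is nothing to reconcile.
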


In \cite{Loan84} it was shown that any skew-Hamiltonian $W$ can be brought
to block-upper-triangular form by an orthogonal-symplectic similarity. Actually,
we can explicitly compute an orthogonal-symplectic matrix $U$ such that
\[
U^TWU=\begin{bmatrix}
W_1 & W_2 \\
O & W_1^T
\end{bmatrix},
\]
where $W_2^T=-W_2$ and $W_1$ is upper Hessenberg
(a matrix is \textsl{upper Hessenberg} if all entries  below its first subdiagonal
are zero). This is called the \textsl{symplectic Paige/Van Loan (PVL) form}.

Subsequently, if the standard QR algorithm is applied to $W_1$ producing
an orthogonal matrix $Q$ and a matrix in real Schur form $N_1$ so that
\[
W_1=QN_1Q^T,
\]
we attain the \textsl{real skew-Hamiltonian Schur decomposition} of $W$,
\begin{equation}
{\cal U}^TW{\cal U}=\begin{bmatrix}
N_1 & N_2 \\
O & N_1^T
\end{bmatrix},
\end{equation}
where
$
{\cal U}=U \begin{bmatrix} Q & O\\
O & Q
\end{bmatrix}
$
and $N_2=Q^TW_2Q$.

\begin{lemma}[real skew-Hamiltonian Schur form]
\label{lemma2}
Let $W \in \matR^{2n\times 2n}$ be skew-Hamiltonian. Then there exists an orthogonal
matrix
\[
{\cal U}=\begin{bmatrix}
{\cal U}_{1} & {\cal U}_{2} \\
-{\cal U}_{2} & {\cal U}_{1}%
\end{bmatrix}, \quad {\cal U}_{1},{\cal U}_{2} \in \mathbb{R}^{n\times n},
\]
such that
\begin{equation}
\label{rshshur}
\mathcal{U}^TW\mathcal{U}=
\begin{bmatrix}
N_{1} & N_{2} \\
0 & N_{1}^T
\end{bmatrix}, \quad N_{2}^T=-N_{2},
\end{equation}
and $N_{1}$ is in real Schur form.
\end{lemma}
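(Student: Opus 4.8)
The plan is to chain together the two reductions recalled just before the statement. First I would invoke Van Loan's result: since $W$ is skew-Hamiltonian there is an orthogonal-symplectic $U$ with
\[
U^T W U = \begin{bmatrix} W_1 & W_2 \\ 0 & W_1^T \end{bmatrix}, \qquad W_2^T = -W_2,
\]
and $W_1$ upper Hessenberg, i.e.\ the symplectic PVL form. For a self-contained argument this is the substantive step: one builds $U$ as a product of orthogonal-symplectic Householder transformations $\diag(H_k,H_k)$ together with rotations of the form $\left[\begin{smallmatrix} C & S \\ -S & C \end{smallmatrix}\right]$, applied as similarities and chosen to annihilate the $(2,1)$ block and to reduce the $(1,1)$ block to upper Hessenberg form, column by column; because each transformation is orthogonal-symplectic, $U^T W U$ stays skew-Hamiltonian, and that forced structure makes the $(2,1)$ block of the result vanish identically while its $(1,2)$ block stays skew-symmetric.

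Second, I would apply the real Schur decomposition (Theorem \ref{Theorem05}) to the $n\times n$ matrix $W_1$, obtaining an orthogonal $Q\in\matR^{n\times n}$ with $Q^T W_1 Q = N_1$ in real, quasi-upper-triangular Schur form.

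Third, I set $\mathcal{U} := U\,\diag(Q,Q)$ and verify two things. The matrix $\diag(Q,Q)$ is orthogonal, and it is symplectic because $\diag(Q,Q)\,J\,\diag(Q,Q)^T = J$ (using $QQ^T = I$); since orthogonal-symplectic matrices are closed under products, $\mathcal{U}$ is itself orthogonal-symplectic and hence has the block form $\left[\begin{smallmatrix} \mathcal{U}_1 & \mathcal{U}_2 \\ -\mathcal{U}_2 & \mathcal{U}_1 \end{smallmatrix}\right]$ recorded in the definition. A single block multiplication then gives
\[
\mathcal{U}^T W \mathcal{U} = \diag(Q^T,Q^T)\begin{bmatrix} W_1 & W_2 \\ 0 & W_1^T \end{bmatrix}\diag(Q,Q) = \begin{bmatrix} Q^T W_1 Q & Q^T W_2 Q \\ 0 & Q^T W_1^T Q \end{bmatrix},
\]
and setting $N_2 := Q^T W_2 Q$ we get $N_2^T = Q^T W_2^T Q = -N_2$ and $Q^T W_1^T Q = (Q^T W_1 Q)^T = N_1^T$, which is exactly (\ref{rshshur}) with $N_1$ in real Schur form.

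The only genuine obstacle is the first step, the PVL reduction: that is where the skew-Hamiltonian structure actually does the work, since it is what makes the $(2,1)$ block collapse and the $(1,2)$ block stay skew-symmetric under the chosen orthogonal-symplectic similarities. Everything after it is just the group property of orthogonal-symplectic matrices together with one block matrix product, so in the write-up I would cite the PVL form and devote the proof to the assembly of $\mathcal{U}$ and the verification of (\ref{rshshur}).
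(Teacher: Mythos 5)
Your proposal is correct and follows essentially the same route as the paper: Van Loan's PVL reduction by an orthogonal-symplectic similarity, the real Schur decomposition of $W_1$, and the assembly ${\cal U}=U\,\diag(Q,Q)$ with $N_2=Q^TW_2Q$, exactly as the text preceding the lemma does. The added verifications (that $\diag(Q,Q)$ is orthogonal-symplectic and that $N_2$ stays skew-symmetric) are correct and complete the argument.
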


In \cite[Theorem 5.1]{PaigeLoan81} we can find a result concerning the \textsl{real Hamiltonian Schur decomposition.}

\begin{theorem}[real Hamiltonian Schur form]
\label{Defn2}
Let $H \in \matR^{2n\times 2n}$ be Hamiltonian. If $H$ has no nonzero purely imaginary eigenvalues, then there exists an orthogonal
matrix
\[
{\cal U}=\begin{bmatrix}
{\cal U}_{1} & {\cal U}_{2} \\
-{\cal U}_{2} & {\cal U}_{1}%
\end{bmatrix}, \quad {\cal U}_{1},{\cal U}_{2} \in \mathbb{R}^{n\times n},
\]
such that
\begin{equation}
\label{rhshur}
\mathcal{U}^TH\mathcal{U}=
\begin{bmatrix}
H_{1} & H_{2} \\
0 & -H_{1}^T%
\end{bmatrix}, \quad H_{2}^T=H_{2},
\end{equation}
and $H_{1}$ is in real Schur form.
\end{theorem}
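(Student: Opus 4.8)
The plan is to construct $\mathcal{U}$ in two stages: first an orthogonal symplectic similarity bringing $H$ to the block upper triangular Hamiltonian form $\begin{bmatrix} H_1 & H_2 \\ 0 & -H_1^T \end{bmatrix}$ with $H_2$ symmetric, and then a further orthogonal symplectic similarity of the form $\mathrm{diag}(Q,Q)$ that replaces $H_1$ by $Q^TH_1Q$ in real Schur form. The heart of the matter is the first stage, which amounts to exhibiting an $H$-invariant subspace $\mathcal{X}$ of dimension $n$ that is isotropic for the bilinear form $\omega(x,y)=x^TJy$ (a Lagrangian invariant subspace); it is precisely here that the hypothesis on the imaginary axis is used.

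For the existence of $\mathcal{X}$ I would first record the structural identities: from $HJ=(HJ)^T$ together with $J^T=-J$ and $J^2=-I$ one gets $J^{-1}HJ=-H^T$ and $H^TJ=-JH$. The first shows that $\sigma(H)$ is symmetric about the origin with matching algebraic multiplicities for $\lambda$ and $-\lambda$; the second gives $\omega(Hx,y)+\omega(x,Hy)=0$, hence $\omega$ vanishes between the generalized eigenspaces $E_\lambda(H)$ and $E_\mu(H)$ whenever $\lambda+\mu\neq 0$ (compare $\omega(u,(H-\mu I)^k v)$ with $(-1)^k\omega((H+\mu I)^k u,v)$ and let $k$ be large, using that $H+\mu I$ is invertible on $E_\lambda$). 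In the generic case, when $H$ has no eigenvalue on the imaginary axis, take $\mathcal{X}$ to be the sum of the generalized eigenspaces for the eigenvalues in the open left half-plane; by the multiplicity symmetry $\dim\mathcal{X}=n$, it is $H$-invariant, and it is isotropic because any two of its eigenvalues $\lambda,\mu$ satisfy $\mathrm{Re}(\lambda+\mu)<0$. (Equivalently: $\omega(Hx,y)+\omega(x,Hy)=0$ makes $e^{tH}$ symplectic, hence $\omega$-preserving, and $e^{tH}x\to 0$ for $x\in\mathcal{X}$, so $\omega(x,y)=\lim_{t\to\infty}\omega(e^{tH}x,e^{tH}y)=0$.)

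Next, let the columns of $\begin{bmatrix}\mathcal{U}_1\\ -\mathcal{U}_2\end{bmatrix}$ form an orthonormal basis of $\mathcal{X}$; orthonormality reads $\mathcal{U}_1^T\mathcal{U}_1+\mathcal{U}_2^T\mathcal{U}_2=I$ and isotropy reads $\mathcal{U}_1^T\mathcal{U}_2=\mathcal{U}_2^T\mathcal{U}_1$, and from these two identities a direct computation shows that $\mathcal{U}=\begin{bmatrix}\mathcal{U}_1 & \mathcal{U}_2\\ -\mathcal{U}_2 & \mathcal{U}_1\end{bmatrix}$ is orthogonal and symplectic. Since $\mathcal{X}$ is $H$-invariant and is the span of the first $n$ columns of $\mathcal{U}$, the matrix $\mathcal{U}^TH\mathcal{U}$ is block upper triangular; and since an orthogonal symplectic similarity preserves the Hamiltonian structure, $\mathcal{U}^TH\mathcal{U}$ is Hamiltonian, which forces its $(2,2)$ block to equal $-H_1^T$ and its $(1,2)$ block $H_2$ to be symmetric. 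Finally, by Theorem \ref{Theorem05} choose a real orthogonal $Q$ with $Q^TH_1Q$ in real Schur form; since $\mathrm{diag}(Q,Q)$ is orthogonal symplectic and of the required partitioned form, replacing $\mathcal{U}$ by $\mathcal{U}\,\mathrm{diag}(Q,Q)$ keeps the $(2,1)$ block zero and the $(1,2)$ block symmetric, turns the $(1,1)$ block into $Q^TH_1Q$ and the $(2,2)$ block into $-(Q^TH_1Q)^T$, which is (\ref{rhshur}).

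The main obstacle is the first stage, i.e. producing the Lagrangian $H$-invariant subspace. When $0$ is allowed as an eigenvalue of $H$ the construction is more delicate: one must show that $E_0(H)$ is even-dimensional (it is, because $\omega$ restricted to it is nondegenerate, $\omega$ being nondegenerate globally and $E_0(H)$ being $\omega$-orthogonal to every $E_\mu(H)$ with $\mu\neq 0$) and, more importantly, that it contains an $H$-invariant isotropic subspace of dimension $\tfrac12\dim E_0(H)$, a condition on the Jordan structure of $H$ at $0$; this is the only point where work beyond linear-algebra bookkeeping is required. The extension of an orthonormal Lagrangian basis to an orthogonal symplectic matrix of the displayed partitioned form is routine once $J^{-1}HJ=-H^T$ and $H^TJ=-JH$ are in hand.
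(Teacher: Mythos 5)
The paper does not actually prove this theorem --- it is quoted from Paige and Van Loan \cite[Theorem 5.1]{PaigeLoan81} --- so there is no internal proof to match; your route (an $H$-invariant Lagrangian subspace, an orthonormal basis of it completed to an orthogonal symplectic matrix, then a $\mathrm{diag}(Q,Q)$ similarity to put the $(1,1)$ block in real Schur form) is the standard argument, and the parts you carry out are correct: the identities $H^TJ=-JH$, the isotropy of the stable subspace, the verification that $\mathcal{U}_1^T\mathcal{U}_1+\mathcal{U}_2^T\mathcal{U}_2=I$ and $\mathcal{U}_1^T\mathcal{U}_2=\mathcal{U}_2^T\mathcal{U}_1$ make $\mathcal{U}$ orthogonal symplectic, and the bookkeeping showing that structure preservation forces the $(2,2)$ block to be $-H_1^T$ and $H_2=H_2^T$.

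The genuine gap is that your construction of the Lagrangian invariant subspace only covers the case where $H$ has \emph{no} eigenvalues on the imaginary axis, whereas the hypothesis of the theorem excludes only \emph{nonzero} purely imaginary eigenvalues: $0$ is allowed, and then the sum of the generalized eigenspaces over the open left half-plane has dimension $n-\tfrac12\dim E_0(H)<n$. You acknowledge this and correctly observe that $E_0(H)$ is a symplectic (even-dimensional) subspace on which $H$ restricts to a nilpotent Hamiltonian map, but you leave unproved exactly the claim that carries the content of the theorem beyond the generic case: that this nilpotent Hamiltonian restriction possesses an invariant isotropic subspace of half its dimension. Your phrasing even suggests this might be ``a condition on the Jordan structure of $H$ at $0$''; under the stated hypothesis it is not a condition but a fact that must be established, e.g.\ via the canonical form of a nilpotent Hamiltonian matrix: odd-sized Jordan blocks at $0$ occur in pairs, and a pair $\mathrm{diag}\bigl(J_k(0),-J_k(0)^T\bigr)$ has the obvious Lagrangian invariant half, while for a single even-sized block $N$ of size $2k$ the subspace $\mathrm{range}(N^k)=\ker(N^k)$ is invariant and isotropic (since $\omega(N^ku,N^kv)=\pm\,u^TJN^{2k}v=0$); the $\omega$-orthogonal direct sum of these pieces, together with the stable subspace, gives the required Lagrangian invariant subspace. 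Without this step (or an explicit appeal to \cite{PaigeLoan81} or the canonical-form results used in \cite{FasMMX99}), the proof establishes only the weaker statement in which $H$ is assumed to have no eigenvalues on the imaginary axis at all.
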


In this article we are interested in the computation of a real square root of a real skew-Hamiltonian matrix $W$ and
to discuss square roots of a skew-Hamiltonian matrix we need to consider a variant of the Jordan canonical form (\ref{JordanBlock}) when the matrix
$A$ is real. In this case, all the nonreal eigenvalues must occur in conjugate pairs and
all the Jordan blocks of all sizes (not just $1\times 1$ blocks) corresponding to nonreal eigenvalues occur
in conjugate pairs of equal size.

For example, if $\lambda$ is a nonreal eigenvalue of the real matrix $A$, and if $J_2(\lambda)$
appears in the Jordan canonical form of $A$ with a certain multiplicity, $J_2(\bar{\lambda})$ must also appear
with the same multiplicity. See \cite[p.150 ff.]{HornJohn85}. The block matrix
\begin{equation}\label{blockPair}
\begin{bmatrix}
J_2(\lambda) & O\\
O & J_2(\bar{\lambda})
\end{bmatrix}=
\begin{pmat}[{.|..}]
\lambda & 1\,       & \, 0 & 0 \cr
      0 &\lambda  & 0 & 0 \cr\-
      0 & 0       & \bar{\lambda} & 1 \cr
      0 & 0       & 0 & \bar{\lambda} \cr
\end{pmat}
\end{equation}
is permutation-similar (interchange rows and columns 2 and 3) to the block matrix
\begin{equation}
\label{Dlambda}
\begin{pmat}[{.|..}]
       \lambda & 0\,       & 1 & 0 \cr
             0 & \bar{\lambda}  & 0 & 1 \cr\-
      0 & 0       & \lambda & 0 \cr
      0 & 0       & 0 & \bar{\lambda} \cr
\end{pmat} =
\begin{bmatrix}
D(\lambda) & I\\
0 & D(\lambda)
\end{bmatrix},
\qquad D(\lambda):=\begin{bmatrix}
\lambda & 0\\
O & \bar{\lambda}
\end{bmatrix}.
\end{equation}
Each block $D(\lambda)$ is similar to the matrix
\begin{equation}\label{C(a,b)}
SD(\lambda)S^{-1}=\begin{bmatrix}
a & b\\
-b & a
\end{bmatrix}:=C(a,b), \qquad S=\begin{bmatrix} -i & -i \\ 1 &-1\end{bmatrix},
\end{equation}
where $\lambda, \bar{\lambda}=a\pm ib$, $a,b\in \matR, \, b \neq 0 $. Thus, every block pair of conjugate $2\times 2$
Jordan blocks (\ref{blockPair}) with nonreal eigenvalue $\lambda$ is similar to a real $4\times 4$ block of the form
\[
\begin{pmat}[{.|..}]
       a & b\,       & 1 & 0 \cr
      -b & a  & 0 & 1 \cr\-
      0 & 0       & a & b \cr
      0 & 0       & -b & a \cr
\end{pmat} =
\begin{bmatrix}
C(a,b) & I\\
O & C(a,b)
\end{bmatrix}.
\]
In general, every block pair of conjugate $k\times k$ Jordan blocks
with nonreal $\lambda$,
\begin{equation}\label{2Jordan}
\begin{bmatrix}
J_k(\lambda) & O\\
O & J_k(\bar{\lambda})
\end{bmatrix},
\end{equation}
is similar to a real $2k\times 2k$ block matrix of the form
\begin{equation}  \label{rjordanb}
C_k(a,b)=\begin{bmatrix}%
C(a,b) & I &  &  &  \\
& C(a,b) & I &  &  \\
&  & \ddots & \ddots &  \\
&  &  & C(a,b) & I \\
&  &  &  & C(a,b)%
\end{bmatrix}.%
\end{equation}
We call $C_k(a,b)$ a \textsl{real Jordan block.} These observations lead us to the \textsl{real Jordan canonical form}.


\begin{theorem}
Each matrix $A\in \matR^{n\times n}$ is similar (via a \emph{real} similarity transformation)
to a block diagonal real matrix of the form
\begin{equation}  \label{rjordanm}
J_R={\begin{bmatrix}
C_{n_1}(a_1, b_1) &  &  &  &  &  \\
& \ddots &  &  &  &  \\
&  & C_{n_p}(a_p,b_p) &  &  &  \\
&  &  & J_{n_{p+1}}(\lambda_{p+1}) &  &  \\
&  &  &  & \ddots &  \\
&  &  &  &  & J_{n_{p+q}}(\lambda_{p+q})%
\end{bmatrix}},
\end{equation}
where $\lambda_k=a_k+ib_k$, $a_k,b_k \in \matR$, $k=1,\ldots,p$,
is a nonreal eigenvalue of $A$ and $\lambda_k$, $k=p+1,\ldots,p+q$, is a real eigenvalue of $A$.
Each real Jordan block $C_{n_k}(a_k,b_k)$
is of the form {\rm (\ref{rjordanb})} and corresponds to a pair of conjugate Jordan blocks
$J_{n_k}(\lambda_k)$ and $J_{n_k}(\bar{\lambda}_k)$ for a nonreal $\lambda_k$
in the Jordan canonical form of $A$ {\rm (\ref{JordanForm})}.
The real Jordan blocks $J_{n_k}(\lambda_k)$ are exactly the Jordan blocks in
{\rm(\ref{JordanForm})} with real $\lambda_k$. Notice that $2(n_1+\cdots+n_p)+(n_{p+1}+\cdots+n_{p+q})=n$.
We call $J_R$ a \emph{real Jordan matrix} of order $n$, a direct sum of real Jordan blocks.


\end{theorem}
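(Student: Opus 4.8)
The plan is to derive the real Jordan canonical form (\ref{rjordanm}) from the complex Jordan canonical form (\ref{JordanForm})--(\ref{JordanBlock}) by ``realifying'' the transforming matrix one generalized eigenspace at a time. First I would decompose $\matC^{n}=\bigoplus_{\lambda}G_{\lambda}$ into the generalized eigenspaces $G_{\lambda}=\ker(A-\lambda I)^{n}$. Since $A$ is real, $\overline{G_{\lambda}}=G_{\overline{\lambda}}$; hence for a real eigenvalue $\lambda$ the space $G_{\lambda}$ is the complexification of $G_{\lambda}\cap\matR^{n}$, while for a nonreal $\lambda$ the space $G_{\lambda}\oplus G_{\overline{\lambda}}$ is the complexification of the real subspace $V_{\lambda}:=(G_{\lambda}\oplus G_{\overline{\lambda}})\cap\matR^{n}$, with $\dim_{\matR}V_{\lambda}=2\dim_{\matC}G_{\lambda}$. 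These real subspaces are $A$-invariant and, since the $G_{\lambda}$ are in direct sum over $\matC$, they are mutually independent and sum to $\matR^{n}$, so it suffices to build, on each of them separately, a real basis in which $A$ acts as a direct sum of blocks of the two advertised types.

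On $G_{\lambda_k}\cap\matR^{n}$ for a real eigenvalue $\lambda_k$, the operator $A-\lambda_k I$ is nilpotent, and the classical existence of a Jordan basis for a nilpotent operator --- valid over any field, in particular over $\matR$ --- furnishes a real basis in which $A$ restricts to a direct sum of standard Jordan blocks $J_{n_k}(\lambda_k)$ as in (\ref{JordanBlock}); these are exactly the blocks $J_{n_{p+1}}(\lambda_{p+1}),\dots,J_{n_{p+q}}(\lambda_{p+q})$ of (\ref{rjordanm}).

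For a nonreal eigenvalue $\lambda_k=a_k+ib_k$ I would work one Jordan chain at a time. Let $v_{1},\dots,v_{m}$ be a complex Jordan chain in $G_{\lambda_k}$, i.e. $Av_{1}=\lambda_k v_{1}$ and $Av_{j}=\lambda_k v_{j}+v_{j-1}$ for $j\ge 2$; then $\overline{v_{1}},\dots,\overline{v_{m}}$ is a Jordan chain for $\overline{\lambda}_k$, and the $2m$ real vectors $x_{j}:=\tfrac12(v_{j}+\overline{v_{j}})$ and $y_{j}:=\tfrac1{2i}(v_{j}-\overline{v_{j}})$ are linearly independent (the passage between $\{v_{j},\overline{v_{j}}\}$ and $\{x_{j},y_{j}\}$ is invertible) and span the real span of the two conjugate chains. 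Separating real and imaginary parts in $Av_{1}=(a_k+ib_k)v_{1}$ and $Av_{j}=(a_k+ib_k)v_{j}+v_{j-1}$ gives
\[
Ax_{1}=a_kx_{1}-b_ky_{1},\qquad Ay_{1}=b_kx_{1}+a_ky_{1},
\]
\[
Ax_{j}=a_kx_{j}-b_ky_{j}+x_{j-1},\qquad Ay_{j}=b_kx_{j}+a_ky_{j}+y_{j-1}\quad(j\ge 2),
\]
so that, ordering the basis as $x_{1},y_{1},x_{2},y_{2},\dots,x_{m},y_{m}$, the matrix of $A$ on this pair of chains is exactly the real Jordan block $C_{m}(a_k,b_k)$ of (\ref{rjordanb}), with $C(a_k,b_k)$ as in (\ref{C(a,b)}). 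Collecting these real bases over all Jordan chains of all eigenvalues yields a real basis of $\matR^{n}$ in which $A$ has the block-diagonal form (\ref{rjordanm}); the corresponding change-of-basis matrix is the desired real similarity, and the identity $2(n_{1}+\cdots+n_{p})+(n_{p+1}+\cdots+n_{p+q})=n$ is just the accounting of basis vectors.

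I expect the realification of the conjugate pairs to be the only point requiring care: one must exploit the freedom in the complex Jordan basis to take the chain for $J_{n_k}(\overline{\lambda}_k)$ to be the conjugate of the chain for $J_{n_k}(\lambda_k)$, and then check both the linear independence of $\{x_{j},y_{j}\}$ and that the interleaved ordering produces precisely the superdiagonal identity blocks of (\ref{rjordanb}). This is essentially the elementary similarity (\ref{blockPair})--(\ref{rjordanb}) already recorded above, now performed at the level of Jordan chains so that the global transformation remains real; everything else is a direct appeal to the complex Jordan form (\ref{JordanForm}) and to the real Jordan form of a nilpotent operator.
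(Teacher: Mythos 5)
Your argument is correct, and it is a complete proof of a statement that the paper itself does not actually prove: the paper only motivates the theorem by the block-level manipulations (\ref{blockPair})--(\ref{rjordanb}) --- pairing each $J_k(\lambda)$ with $J_k(\bar\lambda)$, permuting to the form with $D(\lambda)$ blocks, and applying the \emph{complex} similarity $S$ of (\ref{C(a,b)}) --- and then cites the standard literature (Horn and Johnson) for the result. Your route does the same realification, but at the level of Jordan chains rather than blocks: choosing the chains for $\bar\lambda_k$ to be the conjugates of those for $\lambda_k$ (legitimate, since conjugating a Jordan basis of $G_{\lambda}$ gives one of $G_{\bar\lambda}$ when $A$ is real) and passing to $x_j=\operatorname{Re}v_j$, $y_j=\operatorname{Im}v_j$ produces an explicitly \emph{real} change of basis, so you never need the supplementary fact that two real matrices similar over $\matC$ are similar over $\matR$, which the paper's sketch would require (its matrix $S$ is complex) and which it implicitly delegates to the citation. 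Your computation of the action on the interleaved basis $x_1,y_1,\dots,x_m,y_m$ does reproduce exactly $C_m(a_k,b_k)$ with diagonal blocks $C(a_k,b_k)$ and superdiagonal blocks $I_2$, the independence of $\{x_j,y_j\}$ follows as you say from the invertible passage to $\{v_j,\overline{v_j}\}$, and the generalized-eigenspace decomposition together with the field-independent Jordan theory for the nilpotent operators $A-\lambda_k I$ on the real eigenspaces handles the real eigenvalues; so the proposal is sound and, if anything, more self-contained than the paper's treatment.
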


In \cite{FasMMX99} it is shown that every real
skew-Hamiltonian matrix can also be reduced to a \textsl{real skew-Hamiltonian Jordan
form} via a symplectic similarity. See also \cite{FassIkara05}.

\begin{lemma}
\label{lemma3}\cite[Theorem 1]{FasMMX99} For every real skew-Hamiltonian
matrix $W\in \matR^{2n\times 2n}$ there exists a symplectic matrix ${\Psi}\in \matR^{2n\times 2n}$ such that
\begin{equation}  \label{rjordan}
{\Psi}^{-1}W{\Psi}=\left [%
\begin{array}{cc}
J_R &  \\
& J_R^T%
\end{array}%
\right ],
\end{equation}
where $J_R\in \mathbb{R}^{n\times n}$ is in real Jordan form
{\rm (\ref{rjordanm})} and is unique up to a permutation of real Jordan
blocks.
\end{lemma}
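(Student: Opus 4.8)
The plan is to reduce the claim to the complex Jordan form of $W$ together with the spectral symmetry forced by skew-Hamiltonicity, and then to track the symplectic structure through the transformations. First I would recall that the defining relation $WJ=-(WJ)^T$ is equivalent to $J^{-1}W^TJ=W$, so $W$ and $W^T$ are similar via the symplectic matrix $J$; in particular every eigenvalue of $W$ has even algebraic multiplicity, and the Jordan structure of $W$ attached to each eigenvalue $\mu$ consists of Jordan blocks that come in pairs of equal size. (This is the assertion quoted after Definition~\ref{Defn1}, and it follows from the classical theory of the Jordan form of $J$-selfadjoint matrices; I would cite \cite{FasMMX99} or \cite{HornJohn85} rather than reprove it.) Collecting half of each such pair, one obtains a submatrix $J_R$ in real Jordan form~(\ref{rjordanm}) — here the reality is automatic because nonreal eigenvalues of a real matrix already occur in conjugate pairs of equal-sized blocks, so each such conjugate pair is bundled into a real Jordan block $C_{n_k}(a_k,b_k)$ as in~(\ref{rjordanb}), while real eigenvalues contribute ordinary real Jordan blocks. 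The whole Jordan form of $W$ is then $\diag(J_R, J_R)$ up to similarity.

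Next I would upgrade "similar to $\diag(J_R,J_R)$" to "symplectically similar to $\diag(J_R,J_R^T)$." The first point is that $\diag(J_R,J_R)$ and $\diag(J_R,J_R^T)$ are similar as complex matrices, since $J_R$ and $J_R^T$ have the same Jordan form (a matrix is always similar to its transpose). So there is \emph{some} invertible real $P$ with $P^{-1}WP=\diag(J_R,J_R^T)=:W_R$, and $W_R$ is itself skew-Hamiltonian: $W_RJ=\bigl[\begin{smallmatrix}0&J_R\\-J_R^T&0\end{smallmatrix}\bigr]$ is symmetric, which one checks directly. The task is to replace $P$ by a symplectic $\Psi$. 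For this I would use the standard argument for structured canonical forms: if $P_1,P_2$ both conjugate $W$ to the same $W_R$, then $P_2^{-1}P_1$ commutes with $W_R$; the key computation is that, because both $P_i$ intertwine the skew-Hamiltonian structures (i.e. $P_i^TJP_i$ is, up to the commutant, again a skew-symmetric matrix intertwining $W_R$ with itself), the matrix $M:=P^TJP$ is skew-symmetric and commutes with $W_R$ in the appropriate twisted sense ($M^{-1}W_R^TM = W_R$, i.e. $M$ itself realizes the skew-Hamiltonian symmetry of $W_R$, hence $M J^{-1}$ lies in the commutant of $W_R$). One then shows $M$ can be written as $M = N^T J N$ for some $N$ in the commutant algebra of $W_R$ — this is the structured analogue of "every nondegenerate skew form is equivalent to the standard one," carried out block by block over the (generalized) eigenspaces, using that the commutant of a single Jordan-type block is a full upper-triangular Toeplitz algebra in which the relevant congruence classes are understood. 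Setting $\Psi := P N^{-1}$ gives $\Psi^{-1}W\Psi = N W_R N^{-1} = W_R$ (since $N$ commutes with $W_R$) and $\Psi^T J \Psi = N^{-T}MN^{-1} = J$, so $\Psi$ is symplectic, as required.

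For uniqueness of $J_R$ up to permutation of real Jordan blocks, I would argue that $J_R$ is determined by the Jordan structure of $W$: the block sizes and eigenvalues appearing in $J_R$ are exactly "half" of those in the Jordan form of $W$ (one block from each equal-sized pair), and the Jordan form of $W$ is unique up to ordering; two real Jordan matrices with the same multiset of real Jordan blocks differ only by a block permutation, which can moreover be realized symplectically.

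The main obstacle I expect is the second paragraph — turning an arbitrary similarity into a symplectic one. The eigenvalue symmetry and the shape~(\ref{rjordanm}) are essentially bookkeeping on top of the real Jordan form, but showing that the skew-symmetric intertwiner $M=P^TJP$ can be absorbed into the commutant of $W_R$ requires the explicit structure of that commutant and a careful congruence-normalization on each block, including the coupling between a block $C_{n_k}(a_k,b_k)$ sitting in the $J_R$ slot and its transpose sitting in the $J_R^T$ slot. Since \cite{FasMMX99} (cited as the source of this lemma) carries out exactly this normal-form computation, I would either reproduce its key step or simply invoke it; the remainder of the proof is then routine.
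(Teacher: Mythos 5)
You should note first that the paper gives no proof of this lemma at all: it is stated verbatim as a quotation of Theorem~1 of \cite{FasMMX99}, and that reference carries the entire burden. Your sketch follows what is indeed the standard route behind that theorem --- pair up the Jordan blocks of $W$, pass to $W_R=\operatorname{diag}(J_R,J_R^{T})$, and convert an arbitrary real similarity $P^{-1}WP=W_R$ into a symplectic one by normalizing the skew-symmetric form $M=P^{T}JP$ inside the commutant of $W_R$ --- and the final reduction is set up correctly: if $M=N^{T}JN$ with $NW_R=W_RN$, then $\Psi=PN^{-1}$ is symplectic and still conjugates $W$ to $W_R$; the uniqueness argument via uniqueness of the ordinary Jordan form is also fine.

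Two things, however, keep this from being a proof. First, your derivation of the paired Jordan structure is a non sequitur: from $WJ=JW^{T}$ you conclude that $W$ is similar to $W^{T}$ ``via the symplectic matrix $J$'' and hence has even multiplicities, but every square matrix is similar to its transpose, so that similarity by itself says nothing about multiplicities; what matters is the skew-symmetry of the intertwiner, and the statement that every Jordan block of a skew-Hamiltonian matrix occurs an even number of times is essentially the content of the theorem being proved (even algebraic multiplicity alone can be had cheaply from Van Loan's PVL reduction \cite{Loan84}, but the pairing of blocks of every size is the deep part established in \cite{FasMMX99}). Second, the crux of your argument --- that the skew-symmetric intertwiner $M=P^{T}JP$ can be rewritten as $N^{T}JN$ with $N$ in the commutant of $W_R$, i.e.\ that this structured congruence problem has no hidden invariants (no sign characteristic, in contrast with the Hamiltonian/symmetric-form case) --- is exactly the nontrivial normal-form computation of \cite{FasMMX99}, and you explicitly defer it to that source. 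So the proposal ultimately reduces the lemma to the same citation the paper itself relies on, while its one independently argued ingredient is argued incorrectly. (Minor slips: $W_RJ$ is skew-symmetric, not symmetric, which is what Definition~\ref{Defn1} requires of a skew-Hamiltonian matrix; and the commutant element is $J^{-1}M$, whereas $MJ^{-1}$ commutes with $W_R^{T}$.)
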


In principle, a real square root of $W$ can be obtained by the general method devised by Higham
\cite{Hig87}. Such method, however, does not exploit the structure of $W$. The method we propose
exploit the skew-Hamiltonian structure of $W$ and it also uses the real Schur method.
To make our presentation simpler we decided to present the details of the real Schur method.
See \cite[p. 412 ff.]{Hig87}.


\section{An algorithm for computing real square roots}
\subsection{The Schur method}

Bj\"orck and Hammarling \cite{BjoH83} presented an excellent method for computing a square
root of a matrix $A$. Their method first computes a Schur decomposition
\[
Q^*AQ=T
\]
where $Q$ is unitary and $T$ is upper triangular \cite{GolVL96}, and then determines an upper triangular
square root $Y$ of $T$ with the aid of a fast recursion. A square root of $A$ is given by
\[
X=QYQ^*.
\]
A disadvantage of this Schur method is that if $A$ is real and has nonreal eigenvalues, the method needs
complex arithmetic even if the square root which is computed should be real. When computing a real square root
it is obviously desirable to work with real arithmetic; depending on the relative costs of real and complex arithmetic
on a given computer system, substantial computational savings may occur, and moreover, a computed real square root is guaranteed.

Higham described a generalization of the Schur method which enables real arithmetic to be used
throughout when computing a real square root of a real matrix. In Section \ref{SectionRealShur} we present this method.
First we give, for a nonsingular matrix  $A\in \matR^{n\times n}$,
conditions for the existence of a real square root, and for the existence of a real square root which is a polynomial in $A$.

\subsection{Existence of real square roots}
\label{SectionExistence}
The following result concerns the existence of general real square roots - those which are
not necessarily functions of $A$.

\begin{theorem}
\label{Theorem03}
Let $A\in \matR^{n\times n}$ be nonsingular. $A$ has a real square root if and only if each elementary divisor
of $A$ corresponding to a real negative eigenvalue occurs an even number of times.
\end{theorem}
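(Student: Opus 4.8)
The plan is to work with the real Jordan canonical form and reduce the existence question to a block-by-block analysis. Write $A = P J_R P^{-1}$ with $P$ real and $J_R$ the real Jordan matrix of (\ref{rjordanm}), a direct sum of real Jordan blocks $C_{n_k}(a_k,b_k)$ (for nonreal eigenvalue pairs $a_k \pm i b_k$) and $J_{n_k}(\lambda_k)$ (for real $\lambda_k$). Since $A = X^2$ for real $X$ if and only if $J_R = Y^2$ for real $Y$ (take $Y = P^{-1}XP$), it suffices to decide when $J_R$ has a real square root. I would then treat three cases for the blocks of $J_R$: (i) real Jordan blocks coming from nonreal conjugate pairs, (ii) single Jordan blocks with positive real eigenvalue, (iii) single Jordan blocks with negative real eigenvalue.

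For the sufficiency direction, the strategy is to build $Y$ blockwise. A block $C_{n_k}(a_k,b_k)$ is similar, via the fixed similarity built from $S$ in (\ref{C(a,b)}), to the complex block $\mathrm{diag}(J_{n_k}(\lambda_k),J_{n_k}(\bar\lambda_k))$; by Lemma \ref{Lemma1} each $J_{n_k}(\lambda_k)$ has the upper triangular square root $L_k^{(j)}$, and choosing conjugate branches for the conjugate blocks produces a square root of the pair that is invariant under the conjugation symmetry, hence corresponds to a \emph{real} square root of $C_{n_k}(a_k,b_k)$. For a positive real eigenvalue $\lambda_k>0$, Lemma \ref{Lemma1} gives a real upper triangular square root of $J_{n_k}(\lambda_k)$ directly (the branch $f(\lambda_k)=\sqrt{\lambda_k}\in\matR$). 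For a negative real eigenvalue, a single block $J_{n_k}(\lambda_k)$ has no real square root (its square roots from Lemma \ref{Lemma1} are genuinely complex, and any real square root would have to preserve the real spectrum structure); but a \emph{pair} of equal-sized blocks $\mathrm{diag}(J_{n_k}(\lambda_k),J_{n_k}(\lambda_k))$ with $\lambda_k<0$ can be handled by viewing it as coming from the "nonreal" construction with $b\to 0$: concretely, $\mathrm{diag}(J_{n_k}(\lambda_k),J_{n_k}(\lambda_k))$ is real-similar to $C_{n_k}(\lambda_k,0)$-type data and one can exhibit an explicit real square root using the $2\times 2$ building block $\begin{bmatrix} 0 & \sqrt{|\lambda_k|}\\ -\sqrt{|\lambda_k|} & 0\end{bmatrix}$ whose square is $\lambda_k I_2$. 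So if every elementary divisor for a negative eigenvalue occurs an even number of times, we can pair them up and assemble a real $Y$; reassembling over all blocks gives a real square root of $J_R$, hence of $A$.

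For the necessity direction, suppose $X$ is a real square root of $A$ and some elementary divisor corresponding to a negative eigenvalue $\mu<0$ of $A$ occurs an odd number of times. The eigenvalues of $X$ lying over $\mu$ are purely imaginary, $\pm i\sqrt{|\mu|}$; since $X$ is real, its nonreal eigenvalues (and, by the real Jordan form applied to $X$, its Jordan blocks of each size for nonreal eigenvalues) come in conjugate pairs of equal size. Squaring $X$ maps a Jordan block $J_m(i\sqrt{|\mu|})$ of $X$ to a matrix whose Jordan structure over $\mu$ is again a single block of size $m$ (squaring is a bijection on the relevant block when the eigenvalue is nonzero — this is exactly the content of Lemma \ref{Lemma1} read backwards), and the conjugate block $J_m(-i\sqrt{|\mu|})$ likewise contributes one block of size $m$ over $\mu$. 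Hence the blocks of $A$ over $\mu$ are produced in pairs of equal size, so each size occurs an even number of times — contradiction. I expect the bookkeeping in this necessity argument to be the main obstacle: one must carefully track how the Jordan structure of $X$ at $\pm i\sqrt{|\mu|}$ maps under squaring to the Jordan structure of $A$ at $\mu$, and confirm that two distinct eigenvalues of $X$ both feeding into the single eigenvalue $\mu$ do not merge their blocks in an unexpected way. This is where invoking the description in Lemma \ref{Lemma1} (a Jordan block with nonzero eigenvalue has exactly two square roots, each again a single Jordan block of the same size) does the real work, together with the conjugate-pairing forced by $X$ being real.
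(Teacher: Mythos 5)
The paper never proves Theorem~\ref{Theorem03}: it quotes it as a known result (citing \cite{Hig87}) and merely remarks that the known proof is nonconstructive, so there is no in-paper argument to compare yours against. Your route---pass to the real Jordan form and argue block by block---is the standard one, and it is sound. In particular the necessity direction is correct as you outline it: the only eigenvalues of a real square root $X$ lying over $\mu<0$ are $\pm i\sqrt{|\mu|}$; the generalized eigenspace of $A=X^2$ for $\mu$ is the direct sum of the generalized eigenspaces of $X$ for these two conjugate eigenvalues, so no ``merging'' can occur; a Jordan block of $X$ of size $m$ with nonzero eigenvalue squares to a single Jordan block of the same size (your ``Lemma~\ref{Lemma1} read backwards'', or directly: $\bigl(J_m(\nu)^2-\nu^2I\bigr)^j=\bigl(J_m(\nu)-\nu I\bigr)^j\bigl(J_m(\nu)+\nu I\bigr)^j$ with the second factor invertible when $\nu\neq 0$); and the real Jordan form of the real matrix $X$ forces the block-size multisets at $i\sqrt{|\mu|}$ and $-i\sqrt{|\mu|}$ to coincide, so the elementary divisors of $A$ at $\mu$ occur in pairs.

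The one step you should tighten is the sufficiency argument for a paired negative eigenvalue with block size $n_k>1$. The identification of $\diag\bigl(J_{n_k}(\lambda),J_{n_k}(\lambda)\bigr)$ with $C_{n_k}(\lambda,0)$ is fine (a perfect shuffle does it), but filling in the superdiagonals as in Lemma~\ref{lemma6} with $F=\sqrt{|\lambda|}\,\bigl[\begin{smallmatrix}0&1\\-1&0\end{smallmatrix}\bigr]$ on the diagonal is not automatic: in Lemma~\ref{lemma6} the solvability of the Sylvester recursions rested on $F$ and $-F$ having disjoint spectra, which fails here because the spectrum $\{\pm i\sqrt{|\lambda|}\}$ is symmetric, so the recursions are singular and consistency would have to be checked at each step. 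The clean fix is an explicit square root: if $iM_{n_k}$ (with $M_{n_k}$ real) is a pure imaginary square root of $J_{n_k}(\lambda)$ as in Corollary~\ref{coro}, then $\bigl[\begin{smallmatrix}0&M_{n_k}\\-M_{n_k}&0\end{smallmatrix}\bigr]$ squares to $\diag\bigl(-M_{n_k}^2,-M_{n_k}^2\bigr)=\diag\bigl(J_{n_k}(\lambda),J_{n_k}(\lambda)\bigr)$; this is precisely the device the paper uses (with the reversal matrix $P_{n_k}$, so as to hit $J_{n_k}^T$ rather than $J_{n_k}$) in the proof of Theorem~\ref{theorem4}. With that substitution your blockwise assembly yields a complete, and in fact constructive, proof---more than the paper itself provides for this statement.
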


Theorem \ref{Theorem03} is mainly of theoretical interest, since the proof is nonconstructive
and the condition for the existence of a real square root is not easily checked computationally.
We now focus attention on the real square roots of $A\in \matR^{n\times n}$. The key to analysing the
existence of square roots of this type is the real Schur decomposition.

Suppose that $A\in \matR^{n\times n}$ and that $f$ is defined on the spectrum of $A$ and
consider the real Shcur form of $A$ in (\ref{RealSchur}). Since $A$ and $R$
in (\ref{RealSchur}) are similar, we have
\[
f(A)=Qf(R)Q^T,
\]
so that $f(A)$ is real if and only if
\[
Z=f(R)
\]
is real. It is not difficult to show that $Z$ inherits $R's$ quasi-upper triangular structure and that
\[
Z_{ii}=f(R_{ii}), \qquad i=1,\ldots,m.
\]

If $A$ is nonsingular and $f$ is the square root function, then we have
\[
Z^2=R \quad \text{and} \quad X^2=A \quad \text{with} \quad X=QZQ^T.
\]
The whole of $Z$ is uniquely determined by its diagonal
blocks. To see this equate $(i,j)$ blocks in the equation $Z^2=R$
to obtain
\[
\sum_{k=i}^j Z_{ik}Z_{kj}=R_{ij}, \quad j\ge i.
\]
These equations can be recast in the form
\begin{gather}
\label{equation1}
Z_{ii}^2=R_{ii}, \quad i=1,\ldots,m\\
\label{equation2}
Z_{ii}Z_{ij}+Z_{ij}Z_{jj}=R_{ij}-\sum_{k=i+1}^{j-1}Z_{ik}Z_{kj}, \quad j=i+1,\ldots,m.
\end{gather}
Thus, if the diagonal blocks $Z_{ii}$ are known, (\ref{equation1}) provides an algorithm for computing the remaining
blocks $Z_{ij}$ of $Z$ along one superdiagonal at a time in the order specified by $j-i=1,2,\ldots,m-1$. The condition for
the Sylvester equation $(\ref{equation2})$ to have a unique solution $Z_{ij}$ is that $Z_{ii}$ and $-Z_{jj}$ have no
eigenvalue in common \cite{GolVL96,Lancaster}. This is guaranteed because the eigenvalues of $Z$ are
$\mu_k=f(\lambda_k)$ and for the square root function $f(\lambda_i)=-f(\lambda_j)$ implies that
$\lambda_i=\lambda_j=0$, contradicting the nonsingularity of $A$.

From this algorithm for constructing $Z$ from its diagonal blocks we conclude that $Z$ is real
and hence $f(A)$ is real, if and only if each of the blocks $Z_{ii}=f(R_{ii})$ is real.

We now examine the square roots $f(A)$ of a $2\times 2$ matrix $A$ with complex conjugate eigenvalues.
Since $A$ has 2 distinct
eigenvalues, it follows from\break Corollary \ref{Corolary1} that $A$ has four square roots which are all functions of $A$.
Next lemma says about the form os these square roots.
\begin{lemma}
\label{Lemma7}
Let $A\in \matR^{2\times 2}$ have complex eigenvalues $\lambda, \bar{\lambda}=\theta\pm i \mu$,
where $\mu \neq 0$. Then $A$ has four square roots, each of which is a function of $A$. Two of the square
roots are real, with complex conjugate eigenvalues, and two are pure imaginary, having eigenvalues which are
not complex conjugate.
\end{lemma}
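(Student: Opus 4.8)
My plan is to work directly with the $2 \times 2$ block structure. Since $A \in \matR^{2\times 2}$ has distinct complex conjugate eigenvalues $\lambda, \bar\lambda = \theta \pm i\mu$ with $\mu \neq 0$, Corollary \ref{Corolary1} already guarantees four square roots, each a function of $A$; the content to prove is the \emph{nature} of these four: two real (with complex conjugate eigenvalues) and two pure imaginary (with eigenvalues that are not a conjugate pair). First I would pass to the real Jordan form: by the discussion preceding Lemma \ref{lemma3} (equations (\ref{C(a,b)})--(\ref{rjordanb})), $A$ is \emph{real}-similar to $C(\theta,\mu) = \begin{bmatrix} \theta & \mu \\ -\mu & \theta\end{bmatrix}$, say $A = PC(\theta,\mu)P^{-1}$ with $P$ real. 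Because square roots that are functions of $A$ transform by conjugation, $X = PYP^{-1}$ with $Y^2 = C(\theta,\mu)$, and $X$ is real exactly when $Y$ is real; so it suffices to analyze the four square roots of $C(\theta,\mu)$.

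Next I would diagonalize $C(\theta,\mu)$ over $\complex$ using the matrix $S$ from (\ref{C(a,b)}): $S C(\theta,\mu) S^{-1} = D(\lambda) = \diag(\lambda,\bar\lambda)$. The four square roots of $D(\lambda)$ that are functions of it are $\diag(\pm\sqrt\lambda, \pm\sqrt{\bar\lambda})$ for the four independent sign choices, by Lemma \ref{Lemma1} applied to each $1\times 1$ block; correspondingly the four square roots of $C(\theta,\mu)$ are $Y = S^{-1}\diag(\pm\sqrt\lambda,\pm\sqrt{\bar\lambda})S$. Writing $\sqrt\lambda = \alpha + i\beta$ (the principal branch, with $\alpha > 0$), one has $\sqrt{\bar\lambda} = \overline{\sqrt\lambda} = \alpha - i\beta$. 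Then I split into two cases according to whether the two signs agree or disagree. If the chosen eigenvalues are $\{\alpha+i\beta,\, \alpha-i\beta\}$ (or its negative $\{-\alpha-i\beta,\, -\alpha+i\beta\}$), they form a conjugate pair, and a short computation gives $Y = \pm C(\alpha,\beta)$, which is a \emph{real} matrix with complex conjugate eigenvalues — these are the two real square roots. If instead the signs disagree, the eigenvalue pair is $\{\alpha+i\beta,\, -\alpha+i\beta\}$ (or $\{-\alpha-i\beta,\, \alpha-i\beta\}$): these are \emph{not} complex conjugates (their sum is $2i\beta \neq 0$ unless $\beta = 0$, and $\beta \neq 0$ since $\mu \neq 0$ forces $\lambda$ nonreal hence $\beta \neq 0$), and the same conjugation yields $Y = i\,C(\beta,-\alpha)$ up to sign, a \emph{pure imaginary} matrix.

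The routine part is the two $2\times 2$ matrix products $S^{-1}\diag(\cdot,\cdot)S$, which I would record as a short display rather than belabor; the only point needing a word of care is verifying that $\beta \neq 0$, i.e.\ that $\sqrt\lambda$ is genuinely nonreal. This follows because $\lambda = \theta + i\mu$ with $\mu \neq 0$ is not a nonnegative real, so its principal square root has nonzero imaginary part — concretely $2\alpha\beta = \mu \neq 0$. There is no real obstacle here; the argument is essentially a direct reduction to the $1\times 1$ case via (\ref{C(a,b)}) followed by explicit conjugation, and the main thing to get right is the bookkeeping of which sign combinations give conjugate eigenvalue pairs (hence real $Y$) versus non-conjugate pairs (hence pure imaginary $Y$).
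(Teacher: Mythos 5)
Your argument is correct, and it is worth noting that the paper does not actually prove Lemma \ref{Lemma7} itself: it defers to Higham \cite[p.\ 414 ff.]{Hig87}. Your route differs somewhat from that source. You reduce $A$ by a real similarity to the block $C(\theta,\mu)$ of (\ref{C(a,b)}), diagonalize over $\complex$ with the explicit matrix $S$, and then sort the four branch choices $\diag(\pm\sqrt{\lambda},\pm\sqrt{\bar\lambda})$ into conjugate pairs (giving $\pm C(\alpha,\beta)$, real, conjugate eigenvalues) and non-conjugate pairs (giving $\pm i\,C(\beta,-\alpha)$, pure imaginary), with the needed observation $2\alpha\beta=\mu\neq 0$ so $\alpha,\beta\neq 0$. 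Higham instead exploits the fact that each of the four square roots, being a function of a $2\times 2$ matrix with distinct eigenvalues, is a degree-one polynomial in $A$, which yields the closed form $X=\pm\bigl(\alpha I+\tfrac{1}{2\alpha}(A-\theta I)\bigr)$ for the real roots; that explicit formula is exactly what the paper later reuses computationally in (\ref{Zii}), whereas your conjugation argument proves the classification but does not hand you that formula directly. Both arguments rest on Corollary \ref{Corolary1} to know there are exactly four square roots, all functions of $A$. Two small points of bookkeeping: you state $SC(\theta,\mu)S^{-1}=D(\lambda)$, while the paper's (\ref{C(a,b)}) reads $SD(\lambda)S^{-1}=C(a,b)$; since you then consistently write $Y=S^{-1}\diag(\cdot,\cdot)S$, the two inversions cancel and nothing breaks, but you should align the notation with the paper. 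Also, when transporting back from $C(\theta,\mu)$ to $A$, it is worth saying explicitly that since all square roots of $A$ are functions of $A$ (polynomials in $A$), they all have the form $P\,Y\,P^{-1}$ with $Y^2=C(\theta,\mu)$ and $P$ real, so realness, pure-imaginariness and the eigenvalues are indeed preserved under the reduction.
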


Using this lemma it can be proved

\begin{theorem}
\label{Theorem04}
Let $A\in \matR^{n\times n}$ be nonsingular. If $A$ has a real negative eigenvalue, then $A$ has
no real square roots which are functions of $A$.

If $A$ has no real negative eigenvalues, then there are precisely $2^{r+c}$ real square roots of $A$
which are functions of $A$, where $r$ is the number of distinct real eigenvalues of $A$ and $c$ is the
number of distinct complex conjugate eigenvalue pairs.
\end{theorem}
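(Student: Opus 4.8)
The plan is to reduce everything to the real Schur form and argue block by block. By the discussion preceding the theorem, every square root of $A$ which is a function of $A$ has the form $f(A) = Q\,f(R)\,Q^T$, where $R$ is the real Schur form \eqref{RealSchur} and $f(z)=z^{1/2}$ with a choice of branch at each eigenvalue; such a square root is real if and only if each diagonal block $Z_{ii}=f(R_{ii})$ is real, since the off-diagonal blocks of $Z$ are uniquely determined from the diagonal ones by the Sylvester recursion \eqref{equation1}--\eqref{equation2} (which is solvable by nonsingularity of $A$). So the whole question decomposes into: for which choices of branch is $f(R_{ii})$ real, where $R_{ii}$ is either a $1\times 1$ real scalar or a $2\times 2$ block with a complex conjugate eigenvalue pair?

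First I would handle the first assertion. Suppose $A$ has a real negative eigenvalue $\lambda_i<0$. Any function-square-root $f(A)$ assigns to $\lambda_i$ one of the two values $f(\lambda_i)=\pm i\sqrt{|\lambda_i|}$, both non-real, and by Definition \ref{DefFunM} every Jordan block (equivalently, every diagonal Schur block, up to reordering) carrying that eigenvalue must receive the \emph{same} non-real value. Hence the corresponding diagonal block $Z_{ii}$ — being upper triangular with the repeated non-real entry $f(\lambda_i)$ on its diagonal, as in Lemma \ref{Lemma1} — cannot be real. Therefore $Z$, and hence $f(A)$, is not real: $A$ has no real square root which is a function of $A$.

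For the second assertion, assume $A$ has no real negative eigenvalue. I would count the admissible branch choices. For a $1\times 1$ block $R_{ii}=\lambda_i$ with $\lambda_i>0$ real, both branches $f(\lambda_i)=\pm\sqrt{\lambda_i}$ are real, giving $2$ real choices per distinct real eigenvalue, i.e.\ $2^{r}$ in total. For each $2\times 2$ block with a conjugate pair $\theta\pm i\mu$, $\mu\neq 0$: Lemma \ref{Lemma7} tells us that of the four function-square-roots of such a $2\times 2$ matrix exactly two are real (those with conjugate eigenvalues), the other two being pure imaginary; this corresponds to picking a branch and its conjugate at the pair $\{\lambda_i,\bar\lambda_i\}$, which again yields $2$ admissible (real) choices per distinct conjugate pair, hence $2^{c}$. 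Since the branch choices across distinct eigenvalues are independent, and a choice produces a real $f(A)$ exactly when \emph{every} diagonal block is real, the total number of real square roots of $A$ which are functions of $A$ is $2^{r}\cdot 2^{c}=2^{r+c}$. Distinctness of these $2^{r+c}$ matrices follows from Theorem \ref{Theorem02}: function-square-roots corresponding to different admissible branch assignments have different spectra, so they are genuinely different matrices.

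The main obstacle is the $2\times 2$ block analysis, i.e.\ justifying cleanly that exactly two of the four function-square-roots of a block with eigenvalues $\theta\pm i\mu$ are real. This is precisely the content of Lemma \ref{Lemma7}, so in the write-up the real work is to invoke it correctly: one must check that a real $R_{ii}$ has a real $f(R_{ii})$ precisely when the branch chosen at $\lambda_i$ is paired with its complex conjugate at $\bar\lambda_i$ (the constraint $j_i=j_k$ when $\lambda_i=\lambda_k$ does not bind here because $\lambda_i\neq\bar\lambda_i$), and that this leaves exactly $2$ of the $4$ options. Everything else — the reduction through the Schur recursion, the solvability of \eqref{equation2}, and the independence of blocks — is already established in the material preceding the theorem.
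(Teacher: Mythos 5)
Your argument is correct and follows essentially the route the paper intends: the paper does not actually write out a proof of Theorem \ref{Theorem04} (it defers to Higham \cite{Hig87}), but the machinery it develops in Section 3.2 --- that $f(A)=Qf(R)Q^T$ is real precisely when every diagonal block $Z_{ii}=f(R_{ii})$ of the real Schur form is real, the off-diagonal blocks being real automatically via the Sylvester recursion (\ref{equation1})--(\ref{equation2}), together with Lemma \ref{Lemma7} for the $2\times 2$ blocks --- is exactly what you use, and your branch count ($2$ real choices per distinct positive real eigenvalue, $2$ of the $4$ choices per distinct conjugate pair, distinctness from Theorem \ref{Theorem02}) reproduces the cited count $2^{r+c}$. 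Your treatment of the negative-eigenvalue case is likewise the standard one: a real negative eigenvalue appears as a $1\times 1$ Schur block, whose image $f(\lambda_i)=\pm i\sqrt{|\lambda_i|}$ is non-real for either branch, so no square root that is a function of $A$ can be real.
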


For proofs of Lemma \ref{Lemma7} and Theorem \ref{Theorem04} see \cite[p. 414 ff.]{Hig87}.

It is clear from Theorem \ref{Theorem03} that $A$ may have real negative eigenvalues and yet still have
a real square root; however, as Theorem \ref{Theorem04} shows, the square root will not be a function of $A$.

\subsection{The real Schur method}
\label{SectionRealShur}
The ideas of the last section lead to a natural extension of Bj\"orck and Hammarling's Schur
method for computing in real arithmetic a real square root of a nonsingular $A\in \matR^{n\times n}$.
This real Schur method begins by computing a real Schur decomposition (\ref{RealSchur}), then computes
a square root $Z$ of $R$ from equations (\ref{equation1}) and (\ref{equation2}), and finally obtains a square
root of $A$ via the transformation $X=QZQ^T$.

The solution of Equation (\ref{equation1}) can be computed efficiently in
a way suggested by the proof of Lemma \ref{Lemma7} \cite[p. 417]{Hig87}. The first step is to compute $\theta$
and $\mu$, where $\lambda=\theta + i\mu$ is an eigenvalue of the matrix
\[
R_{ii}=\begin{bmatrix}
r_{11} & r_{12}\\
r_{21} & r_{22}
\end{bmatrix}.
\]
Next, $\alpha$ and $\beta$ such that $(\alpha+i\beta)^2=\theta + i\mu$ are required. Finally, the
real square roots of $R_{ii}$ are given by
\begin{equation}\label{Zii}
Z_{ii}=\pm\left(\alpha I + \frac{1}{2\alpha}\big(R_{ii}-\theta I\big)\right).
\end{equation}

If $Z_{ii}$ is of order $p$ and $Z_{jj}$ is of order $q$, Equation (\ref{equation2}) can be written as
\begin{equation}
\left(I_q\otimes Z_{ii}+Z_{jj}^T \otimes I_p\right)\text{col}(Z_{ij})=\text{col}\left(R_{ij}-\sum_{k=i+1}^{j-1}Z_{ik}Z_{kj}\right),
\quad j>i
\label{linearSystem}
\end{equation}
where $\otimes$ is the Kronecker product and $\text{col}(M)$ denotes the column vector formed by taking columns of $M$
and stacking them atop one another from left to right. The linear system (\ref{linearSystem}) is of order $pq=1,2$ or 4 and
may be solved by standard methods.

Note that to conform with the definition of $f(A)$ we have to choose the signs in $(\ref{Zii})$ so that $Z_{ii}$ and $Z_{jj}$
have the same eigenvalues whenever $R_{ii}$ and $R_{jj}$ do; this choice ensures simultaneously the nonsingularity
of the linear systems (\ref{linearSystem}).

Any of the real square roots $f(A)$ of $A$ can be computed in the above fashion by the real Schur method.

\label{pageRealSchur}
\medskip

\textbf{Algorithm 1 [Real Schur method]}

\medskip

\begin{enumerate}
\item[1.] compute a real Schur decomposition of $A$,
$$
A=Q^TRQ;
$$
\item[2.] compute a square root $Z$ of $R$ solving the equation $Z^2=R$ via
\begin{gather*}
Z_{ii}^2=R_{ii}, \qquad  1 \le i \le m,\\
\left(I_q\otimes Z_{ii}+Z_{jj}^T \otimes I_p\right)\text{col}(Z_{ij})=\text{col}\left(R_{ij}-\sum_{k=i+1}^{j-1}Z_{ik}Z_{kj}\right), \quad j>i
\end{gather*}
[block fast recursion]
\item[3.] obtain a square root of $A$,  $X=QZQ^T$.
\end{enumerate}

\pagebreak

The cost of the real Schur method, measured in floating point
operations (flops) may be broken down as follows. The real Schur
factorization costs about $15n^3$ flops \cite{GolVL96}. The
computation of $Z$ requires $n^3/6$ flops and the formation of
$X=QZQ^T$ requires $3n^3/2$ flops \cite[p. 418]{Hig87}. Only a
fraction of the overall time is spent in computing the square root
$Z$.

\section{Square roots of a skew-Hamiltonian matrix}
\label{sectionsqrtskew}


In this section we present a detailed classification of the square
roots of a skew-Hamiltonian matrix $W \in \matR^{2n\times 2n}$ based on its real skew-Hamiltonian Jordan form (\ref{rjordan}),
\begin{equation}
\label{skewJordan}
{\Psi}^{-1}W{\Psi}=\left [%
\begin{array}{cc}
J_R &  \\
& J_R^T%
\end{array}%
\right ],
\end{equation}
where $J_R\in \mathbb{R}^{n\times n}$ is in real Jordan form
(\ref{rjordanm}) and $\Psi$ is a sympletic matrix. First, we will discuss the square roots of $J_R$.

According to Lemma \ref{Lemma1}, for $\lambda_k\neq 0$ a canonical Jordan block $J_k(\lambda_k)$ has precisely two upper triangular square roots
given by (\ref{JordanBlockSqr}). As a corollary it follows that

\begin{corollary}
\label{coro}  For a real eigenvalue $\lambda_k\neq 0$, the Jordan block $J_k(\lambda_k)$ in {\rm (\ref{JordanBlock})} has precisely
two upper triangular square roots which are real, if $\lambda >0$, and pure imaginary, if $\lambda<0$. Both square roots are functions
of $J_k$.
\end{corollary}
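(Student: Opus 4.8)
The plan is to specialize Lemma \ref{Lemma1} to the case of a real eigenvalue and then inspect the entries of the two upper triangular square roots $L_k^{(1)}$ and $L_k^{(2)}$ given in (\ref{JordanBlockSqr}). By Lemma \ref{Lemma1}, since $\lambda_k \neq 0$, the Jordan block $J_k(\lambda_k)$ has exactly these two upper triangular square roots, and both are functions of $J_k$; this already delivers the final sentence of the corollary as well as the count ``precisely two'', so the only thing left to establish is the reality/pure-imaginariness claim.

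First I would recall that the entries of $L_k^{(j)}$ are the scaled derivatives $f^{(\ell)}(\lambda_k)/\ell!$ for $\ell = 0, \ldots, m_k-1$, where $f(z) = z^{1/2}$ is evaluated on the branch indexed by $j$. When $\lambda_k > 0$, one may take the principal branch so that $f(\lambda_k) = \sqrt{\lambda_k}$ is real and positive, and every derivative $f^{(\ell)}(\lambda_k) = c_\ell \, \lambda_k^{1/2 - \ell}$ (with $c_\ell$ a real rational constant coming from differentiating the power function) is real; hence $L_k^{(1)}$ is a real matrix, and $L_k^{(2)} = -L_k^{(1)}$ is real as well. When $\lambda_k < 0$, write $\lambda_k = -|\lambda_k|$; then on either branch $f(\lambda_k) = \pm\, i\sqrt{|\lambda_k|}$ is pure imaginary, and each derivative $f^{(\ell)}(\lambda_k) = c_\ell\, \lambda_k^{1/2-\ell}$ equals $c_\ell\,(\pm i)\,|\lambda_k|^{1/2-\ell}$, i.e. a real constant times the same pure-imaginary unit; therefore every entry of $L_k^{(j)}$ is pure imaginary, so both $L_k^{(1)}$ and $L_k^{(2)} = -L_k^{(1)}$ are pure imaginary matrices.

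The one point that needs a little care — and the closest thing to an obstacle — is making precise the claim that all the derivative entries along a fixed superdiagonal share the same ``real'' or ``pure imaginary'' character, rather than just the diagonal entry $f(\lambda_k)$. This follows because $\frac{d^\ell}{dz^\ell} z^{1/2} = \binom{1/2}{\ell}\,\ell!\; z^{1/2-\ell}$, so $f^{(\ell)}(\lambda_k)/\ell! = \binom{1/2}{\ell}\,\lambda_k^{1/2-\ell} = \binom{1/2}{\ell}\,\lambda_k^{-\ell}\, f(\lambda_k)$, and the factor $\binom{1/2}{\ell}\,\lambda_k^{-\ell}$ is real for every real $\lambda_k \neq 0$. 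Thus each entry of $L_k^{(j)}$ is a real multiple of $f(\lambda_k)$, which reduces the entire question to the character of the single scalar $f(\lambda_k)$: real when $\lambda_k > 0$, pure imaginary when $\lambda_k < 0$. Finally, in the $\lambda_k < 0$ case one should note that since $\lambda_k$ is a real (hence non-distinct-from-its-conjugate) eigenvalue, the matrix $L_k^{(j)}$ genuinely has nonreal entries, so it is indeed ``pure imaginary'' and not accidentally real; this completes the argument.
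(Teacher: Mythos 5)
Your argument is correct and follows exactly the route the paper intends: the corollary is stated there as an immediate consequence of Lemma \ref{Lemma1}, the point being that every entry of $L_k^{(j)}$ in (\ref{JordanBlockSqr}) is a real multiple of $f(\lambda_k)$, which is real for $\lambda_k>0$ and pure imaginary for $\lambda_k<0$. Your explicit computation $f^{(\ell)}(\lambda_k)/\ell!=\binom{1/2}{\ell}\lambda_k^{-\ell}f(\lambda_k)$ simply spells out the detail the paper leaves implicit, so there is nothing to add.
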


To fully characterize the square roots of a real Jordan block $C_k(a,b)$, we first examine
the square roots of a $2\times 2$ block $C(a, b)$ corresponding to the nonreal eigenvalues
$\lambda, \bar{\lambda}=a\pm ib$, $a,b\in \matR$. According to Lemma \ref{Lemma7},
the real Jordan block $C(a,b)$ in (\ref{C(a,b)}) has four square roots, each of which is a function
of $C(a,b)$. Two of the square roots are real and two are pure imaginary.


\begin{lemma}
\label{lemma6} The real Jordan block $C_k(a, b)$ in {\rm (\ref{rjordanb})}
has precisely four block upper triangular square roots
\begin{equation}
F_k^{(j)}=\begin{bmatrix}
F & F_1 & \cdot & \ldots & F_{k-1} \\
& F & F_1 & \ldots & F_{k-2} \\
&  & \ddots & \ddots & \vdots \\
&  &  & F & F_1 \\
&  &  &  & F%
\end{bmatrix},\quad j=1,\ldots,4,
\end{equation}
where $F$ is a square root of $C(a,b)$ and $F_{i}$, $i=1,\ldots,k-1$, are the unique solutions
of certain Sylvester equations. The superscript $j$ denotes one of the four square roots of $C(a,b)$.
These four square roots $F_k^{(j)}$ are functions of $C_k(a,b)$, two of them are real and two are pure imaginary.
\end{lemma}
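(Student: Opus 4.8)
The plan is to treat $C_k(a,b)$ as a block-triangular Toeplitz matrix with diagonal blocks equal to $C(a,b)$, and to mimic the construction used in Lemma~\ref{Lemma1} and in the real Schur method. First I would look for a square root $F_k$ of $C_k(a,b)$ in block upper triangular form, partitioned conformally with the $2\times 2$ blocks $C(a,b)$, and argue that it must in fact be block Toeplitz. Writing $F_k=(\Phi_{rs})$ with $\Phi_{rr}=F$ (a square root of $C(a,b)$, of which there are four by Lemma~\ref{Lemma7}), the equation $F_k^2=C_k(a,b)$ gives, upon equating $(r,s)$ blocks,
\begin{equation}
\label{recF}
F\,\Phi_{rs}+\Phi_{rs}\,F=
\begin{cases}
I-\sum_{t=r+1}^{s-1}\Phi_{rt}\Phi_{ts}, & s=r+1,\\[2mm]
-\sum_{t=r+1}^{s-1}\Phi_{rt}\Phi_{ts}, & s>r+1,
\end{cases}
\end{equation}
since the superdiagonal of $C_k(a,b)$ carries the identity and all higher superdiagonals are zero. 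For fixed $F$, the Sylvester operator $\Phi\mapsto F\Phi+\Phi F$ is invertible precisely when $F$ and $-F$ have no common eigenvalue; as $F$ is a square root of the nonsingular $C(a,b)$, its eigenvalues are nonzero and their negatives are not eigenvalues of $F$ (a nonzero eigenvalue $\nu$ and $-\nu$ cannot both be square roots of the same value). Hence each equation in \eqref{recF} has a unique solution, and solving superdiagonal by superdiagonal ($s-r=1,2,\dots,k-1$) shows $F_k$ is uniquely determined by $F$. Because the right-hand sides depend only on the difference $s-r$, an easy induction gives $\Phi_{rs}=\Phi_{r',s'}$ whenever $s-r=s'-r'$, so $F_k$ is block Toeplitz; denote the common value on the $i$th superdiagonal by $F_i$. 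This yields exactly four block upper triangular square roots, one for each choice of $F$.

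Next I would verify that these four are all functions of $C_k(a,b)$. Since $C_k(a,b)$ has the single pair of eigenvalues $a\pm ib$ with $b\neq0$, it is similar to the pair of conjugate Jordan blocks $J_k(\lambda)\oplus J_k(\bar\lambda)$ with $\lambda=a+ib$, each appearing in a single Jordan block; by Corollary~\ref{Corolary1} it has exactly $2^2=4$ square roots, every one of which is a function of $C_k(a,b)$. The four block-Toeplitz square roots just produced are four distinct square roots (distinct because their diagonal blocks $F$ are the four distinct square roots of $C(a,b)$), so they must be exactly these four, hence all functions of $C_k(a,b)$.

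Finally, the reality/imaginary count: each $F_k^{(j)}$ is real (respectively pure imaginary) if and only if every block $F_i$, and in particular $F=\Phi_{rr}$, is real (resp. pure imaginary). By Lemma~\ref{Lemma7} two of the four square roots $F$ of $C(a,b)$ are real with conjugate eigenvalues and two are pure imaginary. When $F$ is real, \eqref{recF} is a real Sylvester equation with a real right-hand side (by induction on $s-r$), so its unique solution $\Phi_{rs}$ is real; thus $F_k^{(j)}$ is real. When $F$ is pure imaginary, write $F=iG$ with $G$ real; then $F^2=-G^2$ real forces, by induction, $\Phi_{rs}=i\,G_{rs}$ with $G_{rs}$ real (the Sylvester operator $\Phi\mapsto F\Phi+\Phi F=i(G\Phi+\Phi G)$ maps $i\cdot(\text{real})$ onto $i\cdot(\text{real})$, and the right-hand sides in \eqref{recF} lie in $i\cdot(\text{real})$), so $F_k^{(j)}$ is pure imaginary. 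This gives two real and two pure imaginary square roots, completing the proof.

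The main obstacle is the bookkeeping in \eqref{recF}: one has to justify carefully that the recursion is well posed (invertibility of the Sylvester operator for each of the four choices of $F$) and that it propagates the Toeplitz structure and the real/pure-imaginary alternative down all $k-1$ superdiagonals; the reality propagation in the pure-imaginary case, where one must track the factor $i$ through quadratic terms $\Phi_{rt}\Phi_{ts}$, is the one place where a slip is easy. Everything else is a direct transcription of the arguments already used for \eqref{equation1}--\eqref{equation2} and Lemma~\ref{Lemma1}.
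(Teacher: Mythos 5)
Your proof follows essentially the same route as the paper's: the block upper triangular ansatz with constant diagonal block $F$, the Sylvester recursion along the superdiagonals (uniquely solvable because the spectra of $F$ and $-F$ are disjoint, which holds since $C(a,b)$ has the distinct eigenvalues $a\pm ib$, $b\neq 0$), the resulting block Toeplitz structure, the count of exactly four square roots via Corollary \ref{Corolary1}, and the propagation of realness or pure imaginarity down the superdiagonals. The only slip is in your parenthetical for the pure imaginary case: with $F=iG$, the operator $\Phi\mapsto F\Phi+\Phi F=i(G\Phi+\Phi G)$ maps pure imaginary matrices onto \emph{real} matrices, and the right-hand sides of your recursion are real (the first one is $I$), not of the form $i$ times a real matrix; the induction then yields each $\Phi_{rs}$ pure imaginary exactly as you conclude, so the argument stands once this bookkeeping is corrected.
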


\begin{proof}
Since $C_k(a,b)$ has 2 distinct eigenvalues and the Jordan form
(\ref{2Jordan}) has $p=2$ blocks, from Corollary \ref{Corolary1} we
know that $C_k(a,b)$ has four square roots which are all functions
of $A$.

Let $X$ be a square root of $C_k(a,b)$ ($k>1$).
It is not difficult to see that $X$ inherits $C_k(a,b)$ block upper triangular structure,
\begin{equation}
X=\begin{bmatrix}
X_{11} & X_{12} & \ldots & X_{1,k}\\
       & X_{22} & \ldots & X_{2,k}\\
       &        &\ddots  & \vdots\\
       &        &        & X_{kk}
\end{bmatrix}
\end{equation}
where $X_{i,j}$ are all $2\times 2$ matrices. Equating $(i,j)$ blocks in the equation
\[
X^2=C_k(a,b)
\]
we obtain
\begin{gather}
\label{equationA}
X_{ii}^2=C(a,b), \quad i=1,\ldots,k,\\
\label{equationB}
X_{ii}X_{i,i+1}+X_{i,i+1}X_{i+1,i+1}=I_2,\quad i=1,\ldots,k-1\\
\label{equationC}
X_{ii}X_{ij}+X_{ij}X_{jj}=-\sum_{l=i+1}^{j-1}X_{il}X_{lj}, \quad j=i+2,\ldots,k.
\end{gather}
The whole of $X$ is uniquely determined by its diagonal blocks. If $F$ is one square root of $C(a,b)$,
from (\ref{equationA}) and to conform with the definition of $f(A)$ (the eigenvalues of $X_{ii}$ must be the same),
we have
\begin{equation}
\label{equationD}
X_{ii}=F, \quad i=1,\ldots,k.
\end{equation}

Equations (\ref{equationB}) and (\ref{equationC}) are Sylvester equations and the condition for them
to have a unique solution $X_{ij}$ is that $X_{ii}$ and $-X_{jj}$ have no eigenvalues in common and this
is guaranteed.

From (\ref{equationB}) we obtain the blocks $X_{ij}$ along the first superdiagonal and (\ref{equationD}) forces
them to be all equal, say $F_1$,
\[
X_{12}=X_{23}=\ldots=X_{k-1,k}=F_1.
\]
This implies that the other superdiagonals obtained from (\ref{equationC}) are also constant, say $F_{j-1}$, $j=3,\ldots,k$,
\[
X_{1j}=X_{2,j+1}=\ldots=X_{k-j+1,k}=F_{j-1}, \quad j=3,\ldots,k.
\]

Thus, since there are only exactly four distint square roots of $C(a,b)$ which are functions of $C(a,b)$,
$F=F^{(l)}$, $l=1,\ldots,4$, it follows that $C_k(a,b)$ will also have precisely four square roots which are functions of $C_k(a,b)$. If $F$ is real then $F_{j-1}$, $j=2,\ldots,k$, will also be real.
If $F$ is pure imaginary it can also be seen that $F_{j-1}$, $j=2,\ldots,k$, will be pure imaginary too.
\end{proof}

Next theorem combines Corollary \ref{coro} and Lemma \ref{lemma6} to characterize the square roots of a real Jordan matrix
$J_R$.

\begin{theorem}
\label{theorem1}
Assume that a nonsingular real Jordan matrix $J_R$ in {\rm(\ref{rjordanm})} has $p$ real Jordan blocks
corresponding to $c$ distinct complex conjugate eigenvalue pairs and $q$ canonical Jordan blocks corresponding to
$r$ distinct real eigenvalues.

Then $J_R$ has precisely $2^{2c+{r}}$ square roots which are
functions of $J_R$, given by
\begin{equation}  \label{rmatrixsqrtf}
X_j=\diag\left(F_{n_1}^{(j_1)}, \ldots,
F_{n_{p}}^{(j_{p})}, L_{n_{p+1}}^{(i_1)}, \ldots,
L_{n_{p+q}}^{(i_{q})}\right), \quad j=1,\ldots,2^{2c+{r}},
\end{equation}
corresponding to all possible choices of $j_1, \ldots, j_{p}$, $j_k=1,2,3$ or 4,
and $i_1, \ldots, i_{q}$, $i_k=1$ or 2, subject
to the constraint that $j_l=j_k$ and $i_l=i_k$ whenever
$\lambda_l=\lambda_k$.

If $c+r < p+q$, then $J_R$ has square roots which
are not functions of $J_R$ and they form
$2^{2p+q}-2^{2c+{r}}$ parameterized families
given by
\begin{eqnarray}  \label{rmatrixsqrtnf}
X_j(\Omega)=\Omega \diag\left(F_{n_1}^{(j_1)}, \ldots, F_{n_{p%
}}^{(j_{p})}, L_{n_{p+1}}^{(i_1)}, \ldots, L_{n_{p+q}}^{(i_{q%
})}\right)\Omega^{-1}, \\
\qquad \qquad \qquad j= 2^{2c+{r}}+1, \ldots, 2^{2p+{q}},  \nonumber
\end{eqnarray}
where $j_k=1,2,3$ or 4 and $i_k=1$ or 2, $\Omega$ is an arbitrary
nonsingular matrix which commutes with $J_R$ and for each $j$ there
exist $l$ and $k$
depending on $j$, such that $\lambda_l=\lambda_k$ while $j_l\ne j_k$ or $%
i_l\ne i_k$.
\end{theorem}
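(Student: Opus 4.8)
The plan is to deduce the statement from Theorem~\ref{Theorem02} applied to the (complex) Jordan canonical form of $J_R$, and then translate the resulting description back into the real block structure using Lemma~\ref{lemma6}, Corollary~\ref{coro} and Lemma~\ref{Lemma1}.

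First I would record the passage from the real to the complex Jordan form. By the reductions (\ref{blockPair})--(\ref{2Jordan}), each real Jordan block $C_{n_k}(a_k,b_k)$ of $J_R$ is similar, through an explicit matrix $W_k$ assembled from the similarity $S$ in (\ref{C(a,b)}) and a permutation, to the pair $J_{n_k}(\lambda_k)\oplus J_{n_k}(\bar\lambda_k)$ with $\lambda_k=a_k+ib_k$, while the blocks $J_{n_k}(\lambda_k)$ with real $\lambda_k$ are unchanged. Ordering the Jordan blocks of $J$ so that the two members of each conjugate pair are adjacent, there is a block diagonal nonsingular complex matrix $V=\diag(W_1,\dots,W_p,I,\dots,I)$ with $V^{-1}J_RV=J$, where $J$ is a Jordan matrix as in (\ref{JordanBlock}) with $2p+q$ Jordan blocks and exactly $2c+r$ distinct eigenvalues (each of the $c$ distinct conjugate eigenvalue pairs contributing the two distinct eigenvalues $\lambda_k,\bar\lambda_k$, and the $r$ distinct real eigenvalues contributing $r$ more).

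Next I would apply Theorem~\ref{Theorem02} with this $V$ and $J$. It produces precisely $2^{2c+r}$ square roots of $J_R$ that are functions of $J_R$, namely $V\diag(L^{(j_k)})V^{-1}$ with branches chosen consistently on equal eigenvalues, and --- exactly when $2c+r<2p+q$, which is equivalent to the hypothesis $c+r<p+q$ since $c\le p$ and $r\le q$ force $2c+r\le 2p+q$ with equality iff $c=p$ and $r=q$ --- the remaining square roots, forming $2^{2p+q}-2^{2c+r}$ parametrized families $VU\diag(L^{(j_k)})U^{-1}V^{-1}$ with $U$ a nonsingular matrix commuting with $J$. To recast the function square roots I would use that a function of a block diagonal matrix is the block diagonal of the functions of its blocks (immediate from Definition~\ref{DefFunM}), so that for a square-root function $f$ with prescribed branches $f(J_R)=\diag\big(f(C_{n_1}(a_1,b_1)),\dots,f(J_{n_{p+q}}(\lambda_{p+q}))\big)$; by Lemma~\ref{lemma6} each $f(C_{n_k}(a_k,b_k))$ is one of the four blocks $F_{n_k}^{(j_k)}$, and by Lemma~\ref{Lemma1} and Corollary~\ref{coro} each $f(J_{n_k}(\lambda_k))$ is one of the two blocks $L_{n_k}^{(i_k)}$. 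This gives formula (\ref{rmatrixsqrtf}); the constraint $j_l=j_k$, $i_l=i_k$ whenever $\lambda_l=\lambda_k$ comes from the fact that $f$ is a single function defined on the spectrum (Definition~\ref{DefFunM}); and the count is $4^c\cdot 2^r=2^{2c+r}$.

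Finally, for the non-function families I would set $\Omega:=VUV^{-1}$. Conjugation by $V$ is a bijection between the nonsingular matrices commuting with $J$ and those commuting with $J_R$, and
\[
VU\diag(L^{(j_k)})U^{-1}V^{-1}=\Omega\,\big(V\diag(L^{(j_k)})V^{-1}\big)\,\Omega^{-1},
\]
whose middle factor is again block diagonal and, by the argument of the previous paragraph but now without the global consistency requirement, equals $\diag(F_{n_1}^{(j_1)},\dots,F_{n_p}^{(j_p)},L_{n_{p+1}}^{(i_1)},\dots,L_{n_{p+q}}^{(i_q)})$. Because the four square roots $F_{n_k}^{(m)}$ of a fixed block $C_{n_k}(a_k,b_k)$ are in bijection with the four branch configurations on the pair $(\lambda_k,\bar\lambda_k)$ (Lemma~\ref{lemma6}), a violation of consistency in the $J$-picture --- two blocks sharing an eigenvalue of $J$ but with different branches there --- corresponds exactly to the existence of indices $l,k$ with $\lambda_l=\lambda_k$ but $j_l\ne j_k$ (complex case) or $i_l\ne i_k$ (real case). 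This yields (\ref{rmatrixsqrtnf}), the number $2^{2p+q}-2^{2c+r}$ of families, and the fact that $\Omega$ ranges over nonsingular matrices commuting with $J_R$; the identification of which parameter values yield the same family is inherited from Theorem~\ref{Theorem02}. The step I expect to be the main obstacle is this last translation: checking that the reindexing $L^{(j_k)}\leftrightarrow(F^{(m)},L^{(i)})$ is compatible with the notion of ``function of'' on both sides, and that conjugation by $V$ neither creates nor destroys families; everything else is a direct application of Theorem~\ref{Theorem02} together with the block-diagonal behaviour of matrix functions.
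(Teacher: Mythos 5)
Your proposal is correct and takes essentially the same route as the paper: count the distinct eigenvalues as $s=2c+r$, apply Theorem~\ref{Theorem02}, use the block-diagonal behaviour of matrix functions together with Lemma~\ref{Lemma1}/Corollary~\ref{coro} and Lemma~\ref{lemma6} to obtain (\ref{rmatrixsqrtf}), and derive the non-function families (\ref{rmatrixsqrtnf}) from the second part of Theorem~\ref{Theorem02}. The extra detail you supply --- the explicit similarity $V$ to the complex Jordan form and the conjugation bijection $\Omega=VUV^{-1}$ between matrices commuting with $J$ and with $J_R$ --- is precisely what the paper's proof leaves implicit, so there is no substantive difference in approach.
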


\begin{proof}
The number of distinct eigenvalues is $s=2c+{r}$ and, according to Theorem \ref{Theorem02},
$J_R$ has precisely $2^s=2^{2c+{r}}$ square roots which are functions of $J_R$.
All square roots of $J_R$ which are functions of $J_R$ satisfy
\begin{align*}
f(J_R)&  = \left [%
\begin{array}{cccccc}
f(C_{n_1}) &  &  &  &  &  \\
& \ddots &  &  &  &  \\
&  & f(C_{n_{p}}) &  &  &  \\
&  &  & f(J_{n_{p+1}}) &  &  \\
&  &  &  & \ddots &  \\
&  &  &  &  & f(J_{n_{p+{q}}})%
\end{array}%
\right ]
\end{align*}
and, according to Lemma \ref{Lemma1} and Lemma \ref{lemma6} these are given by (\ref{rmatrixsqrtf}).
The constraint on the branches $\{j_k\}$ and $\{i_k\}$ comes from Definition \ref{DefFunM} of matrix
\break function. The remaining square roots of $J_R$, if they
exist, cannot be functions of $J_R$. Equation
(\ref{rmatrixsqrtnf}) derives from the second part of Theorem \ref{Theorem02}.
\end{proof}

From Theorem \ref{theorem1}, Lemma \ref{lemma6} and Corollary \ref{coro} the next result concerning the square roots
of $A$ which are functions of $A$ follows immediately.

\begin{corollary}
Under the assumptions of  {\rm Theorem \ref{theorem1}},

\begin{enumerate}
\item[(1)] if $J_R$ has a real negative eigenvalue, then $J_R$ has no real square roots which are functions of
$J_R$;
\item[(2)] if $J_R$ has no real negative eigenvalues, then $J_R$ has precisely $2^{c+{r}}$ real square roots
which are functions of $J_R$, given by {\rm(\ref{rmatrixsqrtf})} with the choices of
$j_1, \ldots, j_p$ corresponding to real square roots $F_{n_1}^{(j_1)}, \ldots,
F_{n_{p}}^{(j_{p})}$;
\item[(3)] if $J_R$ has no real positive eigenvalues, then $J_R$ has precisely $2^{c+{r}}$ pure\break imaginary
square roots which are functions of $J_R$, given by
{\rm(\ref{rmatrixsqrtf})} with the choices of $j_1, \ldots, j_p$
corresponding to pure imaginary square roots \break
$F_{n_1}^{(j_1)},\dots,F_{n_p}^{j_p}$.
\end{enumerate}

\end{corollary}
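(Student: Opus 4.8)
The plan is to derive all three statements directly from Theorem \ref{theorem1}, using the sign/branch bookkeeping established in Corollary \ref{coro} and Lemma \ref{lemma6}. Recall that a square root of $J_R$ which is a function of $J_R$ has the block-diagonal form (\ref{rmatrixsqrtf}), where each diagonal block is one of the square roots of a real Jordan block $C_{n_k}(a_k,b_k)$ (for a conjugate pair of nonreal eigenvalues) or of a canonical Jordan block $J_{n_k}(\lambda_k)$ (for a real eigenvalue). Since $J_R$ is real, $X_j$ in (\ref{rmatrixsqrtf}) is real if and only if every diagonal block is real; and $X_j$ is pure imaginary if and only if every diagonal block is pure imaginary, because a block-diagonal matrix is real (resp. pure imaginary) exactly when all of its blocks are.

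For part (1): suppose $\lambda_k<0$ is a real negative eigenvalue of $J_R$, sitting in some canonical Jordan block $J_{n_k}(\lambda_k)$. By Corollary \ref{coro}, the only square roots of this block which are functions of it are the two pure-imaginary ones $L_{n_k}^{(1)}$ and $L_{n_k}^{(2)}$; there is no real choice for this diagonal block. Hence no $X_j$ in (\ref{rmatrixsqrtf}) can be real, i.e. $J_R$ has no real square root which is a function of $J_R$.

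For parts (2) and (3) I would count the admissible branch choices. First assume $J_R$ has no real negative eigenvalue. For each of the $c$ distinct conjugate pairs, Lemma \ref{lemma6} says the associated real Jordan block $C_{n_k}(a_k,b_k)$ has exactly four square roots which are functions of it, of which exactly two are real; so there are $2$ admissible choices per distinct pair, and by the constraint $j_l=j_k$ whenever $\lambda_l=\lambda_k$ these choices are made once per distinct pair, contributing a factor $2^c$. For each of the $r$ distinct real eigenvalues, all of which are now positive, Corollary \ref{coro} gives exactly two square roots of the corresponding Jordan block, both real, so there are $2$ admissible choices per distinct real eigenvalue, again made once per distinct value, contributing a factor $2^r$. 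Multiplying, $J_R$ has exactly $2^{c+r}$ real square roots which are functions of $J_R$, and by construction they are precisely the matrices (\ref{rmatrixsqrtf}) obtained by selecting, for each $j_k$, one of the two real square roots of $C_{n_k}$. Part (3) is entirely symmetric: assuming no real positive eigenvalue, for each distinct real (now negative) eigenvalue Corollary \ref{coro} gives two square roots of the Jordan block, both pure imaginary, contributing $2^r$; for each distinct conjugate pair Lemma \ref{lemma6} gives two pure-imaginary square roots of $C_{n_k}$, contributing $2^c$; and the block-diagonal assembly is pure imaginary iff every block is, yielding exactly $2^{c+r}$ pure-imaginary square roots which are functions of $J_R$, namely (\ref{rmatrixsqrtf}) with the $j_k$ chosen to give pure-imaginary $F_{n_k}^{(j_k)}$.

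The only point needing a little care — and the main (mild) obstacle — is the reality/imaginariness of the off-diagonal blocks within each real Jordan block's square root: one must invoke the last two sentences of the proof of Lemma \ref{lemma6}, which guarantee that once the diagonal block $F$ of $F_{n_k}^{(j_k)}$ is real (resp. pure imaginary), the solutions $F_1,\dots,F_{n_k-1}$ of the Sylvester equations are real (resp. pure imaginary) as well, so the whole block $F_{n_k}^{(j_k)}$ is real (resp. pure imaginary); the analogous fact for canonical Jordan blocks is immediate from the explicit triangular form (\ref{JordanBlockSqr}) in Lemma \ref{Lemma1}, since $f(\lambda_k)$ and all $f^{(i)}(\lambda_k)/i!$ are real when $\lambda_k>0$ and pure imaginary when $\lambda_k<0$. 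With that in hand, all three assertions follow by the counting above.
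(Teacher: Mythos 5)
Your argument is correct and is exactly the deduction the paper intends: the paper states this corollary without a written proof, noting only that it ``follows immediately'' from Theorem \ref{theorem1}, Lemma \ref{lemma6} and Corollary \ref{coro}, and your block-by-block counting (two real and two pure imaginary branches per distinct conjugate pair, two branches per distinct real eigenvalue, with the branch constraint applied per distinct eigenvalue) fills in precisely that deduction. Your extra care about the reality or pure-imaginariness of the off-diagonal blocks $F_1,\dots,F_{n_k-1}$ is the right point to check and is indeed covered by the closing statements of Lemma \ref{lemma6} and the explicit form (\ref{JordanBlockSqr}).
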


Now we want to use all these results to characterize the square roots of a real skew-Hamiltonian matrix $W$.

\begin{theorem}
\label{Theorem2} Let $W\in\matR^{2n\times 2n}$ be a nonsingular skew-Hamiltonian matrix with
the real skew-Hamiltonian Jordan form in {\rm(\ref{skewJordan})}. Assume that $J_R$ has $p$ real Jordan blocks
corresponding to $c$ distinct complex conjugate eigenvalue pairs and $q$ canonical Jordan blocks corresponding to
$r$ distinct real eigenvalues.

Then $W$ has precisely $2^{2c+{r}}$ square roots
which are functions of $W$, given by
\begin{equation}  \label{cshamilsqrt}
Y_j={\Psi} \diag\left(X_j, {X_j}^T\right){\Psi}^{-1},\quad j=1,\ldots,2^{2c+r},
\end{equation}
where $X_j$ is a square root of $J_R$ given in (\ref{rmatrixsqrtf}).

$W$ has always square roots
which are not functions of $W$ and they form
$4^{2p+q}-2^{2c+{r}}$ parameterized families given by
\begin{equation}  \label{fcshamilsqrt}
Y_j(\Theta)=\Psi\Theta \diag\left(\widetilde{X}_j, {\widehat{X}_j}^T\right)\Theta^{-1}\Psi^{-1},
 \quad j= 2^{2c+{r}}+1, \ldots, 4^{2p+{q}},
\end{equation}
where
\begin{align*}
\widetilde{X}_j&=\diag\left(F_{n_1}^{(j_1)}, \ldots, F_{n_{p%
}}^{(j_{p})}, L_{n_{p+1}}^{(i_1)}, \ldots, L_{n_{p+q}}^{(i_{q%
})}\right),\\
\widehat{X}_j&=\diag\left(F_{n_1}^{(j_{p+1})}, \ldots, F_{n_{p%
}}^{(j_{2p})}, L_{n_{p+q}}^{(i_{q+1})}, \ldots, L_{n_{p+q}}^{(i_{2q%
})}\right),
\end{align*}
$j_k=1,2,3$ or 4 and $i_k=1$ or 2,
$\Theta$ is an arbitrary nonsingular matrix which
commutes with  $\diag\left(J_R, J_R^T\right)$ and for each $j$ there
exist $l$ and $k$ depending on $j$, such that $\lambda_l=\lambda_k$ while $j_l\ne j_k$ or
$i_l\ne i_k$.
\end{theorem}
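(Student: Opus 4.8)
The plan is to transfer the classification of square roots of $J_R$ already obtained in Theorem \ref{theorem1} through the symplectic similarity $\Psi$ to $W$, and then to account carefully for the extra freedom coming from the block structure $\diag(J_R,J_R^T)$. First I would observe that since $\Psi^{-1}W\Psi=\diag(J_R,J_R^T)=:M$, a matrix $Y$ satisfies $Y^2=W$ if and only if $\widetilde Y:=\Psi^{-1}Y\Psi$ satisfies $\widetilde Y^2=M$; hence it suffices to classify square roots of $M$ and then conjugate back by $\Psi$. Note that $M$ itself is nonsingular (because $W$ is), its distinct eigenvalues are exactly those of $J_R$, namely $s=2c+r$ of them, and in the Jordan form of $M$ each nonreal conjugate pair $\lambda_k,\bar\lambda_k$ now appears in $2p$ Jordan blocks (the $p$ pairs coming from $J_R$ together with their $p$ transposed copies from $J_R^T$) and each real eigenvalue in $2q$ blocks. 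So $M$ has $2\cdot 2p=4p$ real Jordan blocks and $2q$ real scalar Jordan blocks in its real Jordan canonical form, while still only $2c$ distinct complex-conjugate pairs and $r$ distinct real eigenvalues.

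Next I would apply Theorem \ref{Theorem02} (or equivalently Theorem \ref{theorem1}) directly to $M$. The number of square roots of $M$ that are functions of $M$ is $2^{s}=2^{2c+r}$, and a square root of $M$ which is a function of $M$ must be block diagonal with respect to the decomposition $\diag(J_R,J_R^T)$, with the two diagonal pieces being square roots of $J_R$ and of $J_R^T$ respectively; moreover $f(J_R^T)=f(J_R)^T$ for any $f$ defined on the spectrum, so the piece on $J_R^T$ is forced to be $X_j^T$ once the piece on $J_R$ is the function-square-root $X_j$. The constraint $j_i=j_k$ whenever $\lambda_i=\lambda_k$ from Definition \ref{DefFunM}, applied across \emph{both} blocks, is precisely what ties the branch on $J_R^T$ to the branch on $J_R$. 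This yields exactly the family (\ref{cshamilsqrt}) with $X_j$ as in (\ref{rmatrixsqrtf}), and conjugating back by $\Psi$ gives the stated $2^{2c+r}$ square roots of $W$ that are functions of $W$.

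For the square roots of $M$ which are not functions of $M$, I would invoke the second half of Theorem \ref{Theorem02}: they form parametrized families $X(\Omega)=\Omega\, D\, \Omega^{-1}$ where $D$ runs over the block-diagonal choices of branch-square-roots on \emph{each} of the $2p$ real Jordan blocks and $2q$ scalar Jordan blocks independently, and $\Omega$ is an arbitrary nonsingular matrix commuting with the Jordan matrix of $M$, subject to the standard requirement that the branch choices violate the $f(A)$-constraint for at least one repeated eigenvalue. The total count of branch patterns on $M$ is $4^{2p}\cdot 2^{2q}$ (four square roots per real Jordan block from Lemma \ref{lemma6}, two per real scalar block from Lemma \ref{Lemma1}), which equals $4^{2p+q}$; subtracting the $2^{2c+r}$ function-square-roots leaves $4^{2p+q}-2^{2c+r}$ families, matching (\ref{fcshamilsqrt}). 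Writing $D=\diag(\widetilde X_j,\widehat X_j^{\,T})$ — with $\widetilde X_j$ the branch pattern on the $J_R$ copy and $\widehat X_j$ that on the $J_R^T$ copy — and noting that commuting with $M=\diag(J_R,J_R^T)$ is the defining property of $\Theta$, then conjugating by $\Psi$ (absorbing it into $\Psi\Theta$), gives exactly (\ref{fcshamilsqrt}). Finally I would remark that $W$ \emph{always} has non-function square roots because the eigenvalue structure of $M$ forces repeated eigenvalues across the two blocks (each eigenvalue of $J_R$ reappears in $J_R^T$), so the hypothesis $c+r<2p+2q$ of the non-function case always holds; this is the one point that needs a sentence rather than a mere citation.

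The main obstacle I anticipate is not any single hard estimate but rather bookkeeping: one must be scrupulous that the constraint "$j_i=j_k$ whenever $\lambda_i=\lambda_k$" is read globally over the doubled block $M$ (this is what pins $f(J_R^T)=f(J_R)^T$ and forbids independent branch choices on the two copies for the function case), and that the parameter $\Theta$ in the non-function case genuinely ranges over all commutants of $\diag(J_R,J_R^T)$ and not merely over block-diagonal ones — it is exactly the off-diagonal part of $\Theta$ that produces square roots not obtainable from $\Psi$ alone. Care is also needed to confirm that the parametrized families (\ref{fcshamilsqrt}) are genuinely distinct as families (i.e., that the counting $4^{2p+q}-2^{2c+r}$ does not overcount), which follows from the same argument as in Theorem \ref{Theorem02} since distinct admissible branch patterns give families with distinct spectra.
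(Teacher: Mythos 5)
Your proposal is correct and follows essentially the same route as the paper: conjugate by $\Psi$ to reduce to $\diag(J_R,J_R^T)$, observe that $\diag\left(X_j,X_j^T\right)$ squares to $\diag\left(J_R,J_R^T\right)$ and is a function of it (since $f(J_R^T)=f(J_R)^T$), and obtain both the function square roots and the $4^{2p+q}-2^{2c+r}$ non-function families from Theorem \ref{Theorem02} together with Theorem \ref{theorem1}. In fact your write-up is more explicit than the paper's proof on the branch-pattern counting and on why non-function square roots always exist (the paper relegates the latter to the remark preceding its proof); the only nitpick is your phrase ``real scalar Jordan blocks'' for the canonical Jordan blocks $J_{n_k}(\lambda_k)$ with real $\lambda_k$, which need not be $1\times 1$.
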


Notice that $W$ has $s=2c+r$ distinct eigenvalues corresponding to $2(p+q)$ real Jordan blocks and $2(2p+q)$ canonical Jordan blocks.
We always have $s\leq 2p+q$ and so $s < 2(2p+q)$. Thus, there are always square roots which are not functions of $W$.

\begin{proof}
This result is a direct consequence of Theorem \ref{Theorem02}
and Theorem \ref{theorem1}. Notice that if $X_j$ in
(\ref{rmatrixsqrtf}) is a square root of $J_R$ then
$\diag\left(X_j,X_j^T\right)$ is a square root of
$\diag\left(J_R,J_R^T\right)$. Thus,
\begin{align*}
W&=\Psi \begin{bmatrix}
J_R &\\
    &J_R^T
\end{bmatrix} \Psi^{-1}\\
&=
\Psi \begin{bmatrix}
X_j &\\
    &X_j^T
\end{bmatrix}
\begin{bmatrix}
X_j &\\
    &X_j^T
\end{bmatrix} \Psi^{-1}\\
&=
\Psi \begin{bmatrix}
X_j &\\
    &X_j^T
\end{bmatrix}
\Psi^{-1}
\Psi
\begin{bmatrix}
X_j &\\
    &X_j^T
\end{bmatrix} \Psi^{-1}.
\end{align*}
Thus,
\[
Y_j=\Psi
\begin{bmatrix}
X_j &\\
    &X_j^T
\end{bmatrix} \Psi^{-1}
\]
is a square root of $W$. Since $X_j$ is a function of $J_R$, $Y_j$
is a function of $\diag\left(J_R,J_R^T\right)$. This proves the
first part of the theorem.

The second part follows from the second part of Theorem
\ref{Theorem02} and the fact that $\diag\left(\widetilde{X}_j,
{\widehat{X}_j}^T\right)$ is a square root of
$\diag\left(J_R,J_R^T\right)$.
\end{proof}

It is easy to verify that if $W$ is a skew-Hamiltonian matrix, then $W^2$ is also skew-Hamiltonian.
This implies that any function of $W$, which is a polynomial by definition, is a skew-Hamiltonian matrix.
Thus, all the square roots of $W$ which are functions of $W$ are skew-Hamiltonian matrices. The following result
refers to the existence of real square roots of a skew-Hamiltonian matrix.

\begin{corollary}
\label{coro3} Under the assumptions of  {\rm Theorem \ref{Theorem2}},
the following statements hold:

\begin{enumerate}
\item[{(1)}] if $W$ has a real negative eigenvalue,
then $W$ has no real skew-Hamiltonian square roots;

\item[{(2)}] if $W$ has no real negative eigenvalues, then $W$ has precisely $2^{c+{r}}$
real skew-Hamiltonian square roots which are functions of $W$, given by {\rm(\ref{cshamilsqrt})} with the choices of
$j_1, \ldots, j_p$ corresponding to real square roots $F_{n_1}^{(j_1)}, \ldots,
F_{n_{p}}^{(j_{p})}$.

\end{enumerate}

\end{corollary}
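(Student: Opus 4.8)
The plan is to derive Corollary \ref{coro3} directly from Theorem \ref{Theorem2}, essentially by tracking when the square roots listed in (\ref{cshamilsqrt}) are real, and combining this with the observation (already noted immediately before the corollary) that every square root which is a function of $W$ is automatically skew-Hamiltonian. Thus ``real skew-Hamiltonian square root which is a function of $W$'' reduces to ``real square root which is a function of $W$'', and the content is purely about reality.

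For part (1), I would argue by contradiction: suppose $W$ has a real negative eigenvalue $\lambda_k < 0$ and a real skew-Hamiltonian square root $Y$. Since $Y$ is a square root that is a function of $W$, by Theorem \ref{Theorem2} it has the form $Y_j = \Psi\,\diag(X_j, X_j^T)\,\Psi^{-1}$ with $X_j$ a square root of $J_R$ as in (\ref{rmatrixsqrtf}). The eigenvalue $\lambda_k < 0$ is a real eigenvalue of $J_R$, so it contributes a canonical Jordan block $J_{n_{p+\ell}}(\lambda_k)$ whose square root factor is $L_{n_{p+\ell}}^{(i_\ell)}$; by Corollary \ref{coro} both upper triangular square roots of such a block are pure imaginary (never real), since $\lambda_k<0$. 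Hence $X_j$, and therefore $Y_j$, cannot be real — contradiction. This is the same mechanism as in Theorem \ref{Theorem04}, now transported through $\Psi$ and the block-diagonal embedding.

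For part (2), assume $W$ has no real negative eigenvalue. By Theorem \ref{Theorem2} the square roots of $W$ that are functions of $W$ are exactly the $Y_j = \Psi\,\diag(X_j,X_j^T)\,\Psi^{-1}$, and such a $Y_j$ is real iff $X_j$ is real (note $X_j^T$ is real iff $X_j$ is, and $\Psi$ is a fixed real symplectic matrix, so $Y_j$ is real precisely when $\diag(X_j,X_j^T)$ is). Now I invoke Corollary after Theorem \ref{theorem1}: since $J_R$ has no real negative eigenvalue, it has precisely $2^{c+r}$ real square roots which are functions of $J_R$, namely those (\ref{rmatrixsqrtf}) in which each complex-conjugate block $C_{n_k}(a_k,b_k)$ is assigned one of its \emph{two} real square root branches (Lemma \ref{lemma6}) and each real positive block $J_{n_{p+\ell}}(\lambda)$ one of its two square roots (both real by Corollary \ref{coro}, since $\lambda>0$). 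Counting: $2^c$ choices from the $c$ distinct complex pairs and $2^r$ from the $r$ distinct real eigenvalues, giving $2^{c+r}$ real choices of $X_j$, hence exactly $2^{c+r}$ real $Y_j$, and these are indexed by the stated choices of $j_1,\dots,j_p$ corresponding to real $F_{n_k}^{(j_k)}$.

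The only mildly delicate point — and the one I would make sure to state cleanly rather than the main obstacle — is the equivalence ``$Y_j$ real $\iff$ $X_j$ real'': one direction is immediate from the formula, and the converse uses that $\Psi$ is real and invertible so that $\diag(X_j,X_j^T) = \Psi^{-1} Y_j \Psi$ is real whenever $Y_j$ is. Everything else is bookkeeping with the counts already established. I would close by remarking that the skew-Hamiltonian structure in the statement is not an extra hypothesis to be verified but a free consequence: $W^2$ skew-Hamiltonian implies every polynomial in $W$ is skew-Hamiltonian, so the real square roots of $W$ that are functions of $W$ are the real skew-Hamiltonian ones, and the corollary follows.
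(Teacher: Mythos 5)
Your part (2) is correct and is the route the paper intends: the corollary is stated without a separate proof, as direct bookkeeping on Theorem \ref{Theorem2}, on the remark preceding the corollary (every function of $W$ is skew-Hamiltonian), and on the count of real square roots of $J_R$ in the corollary to Theorem \ref{theorem1}. Your observation that $Y_j$ in (\ref{cshamilsqrt}) is real precisely when $X_j$ is (because $\Psi$ is real and invertible), followed by the $2^c\cdot 2^r$ count over distinct eigenvalues, is exactly that argument.

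Part (1), as you wrote it, has a genuine gap. Having assumed only that $Y$ is a real skew-Hamiltonian square root of $W$, you immediately assert ``Since $Y$ is a square root that is a function of $W$\dots''. The implication you (and the paper) actually have runs the other way: a function of $W$ is skew-Hamiltonian, but a real skew-Hamiltonian square root need not be a polynomial in $W$. Moreover this step cannot be repaired for the literal statement, because read as a claim about \emph{all} real skew-Hamiltonian square roots part (1) is false: $W=-I_4$ is a nonsingular skew-Hamiltonian matrix with the real negative eigenvalue $-1$, and
\[
Y=\begin{bmatrix} O & G\\ G & O\end{bmatrix},\qquad G=\begin{bmatrix}0&1\\-1&0\end{bmatrix},
\]
is real and skew-Hamiltonian (both off-diagonal blocks are skew-symmetric, the diagonal blocks are $O$ and $O^T$), with
\[
Y^2=\begin{bmatrix} G^2 & O\\ O & G^2\end{bmatrix}=-I_4=W.
\]
What your mechanism does prove --- and what the statement is evidently meant to say, in parallel with part (1) of the corollary to Theorem \ref{theorem1} --- is that $W$ has no real skew-Hamiltonian square roots \emph{which are functions of} $W$: for a square root of the form (\ref{cshamilsqrt}), the triangular block attached to a negative real eigenvalue is pure imaginary by Corollary \ref{coro}, so $X_j$, and hence $Y_j$, cannot be real. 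The fix is therefore to restrict the class of square roots you quantify over, not to try to deduce ``function of $W$'' from ``real and skew-Hamiltonian'', which is false in general (already $W=I_4$ has real skew-Hamiltonian square roots that are not polynomials in $W$).
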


It is clear from Theorem \ref{Theorem03} that $W$ may have real negative eigenvalues and yet still have
a real square root; however, the square root will not be a function of $W$.

In \cite[Theorem 2]{FasMMX99} it is shown that

\begin{lemma}
\label{lemma4} Every real skew-Hamiltonian matrix $W$
has a real Hamiltonian square root.
\end{lemma}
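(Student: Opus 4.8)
The plan is to reduce the problem to the structure of the real skew-Hamiltonian Jordan form in Lemma \ref{lemma3} and then exhibit an explicit Hamiltonian square root block by block. Starting from the symplectic similarity $\Psi^{-1}W\Psi=\operatorname{diag}(J_R,J_R^T)$, it suffices to produce a \emph{real} matrix $Y$ with $Y^2=\operatorname{diag}(J_R,J_R^T)$ which, moreover, is Hamiltonian; since $\Psi$ is symplectic, $\Psi Y\Psi^{-1}$ will then be a real Hamiltonian square root of $W$, because the symplectic similarity class preserves the Hamiltonian structure (as recalled in Subsection \ref{HamiltSkew}). So the whole question is local: for each real Jordan block $C_{n_k}(a_k,b_k)$ or $J_{n_k}(\lambda_k)$ appearing in $J_R$, I must find a real $2n_k\times 2n_k$, respectively $2n_k\times 2n_k$, matrix $Y_k$ with $Y_k^2=\operatorname{diag}\bigl(\text{block},\text{block}^T\bigr)$ and $Y_k J_{2n_k}=(Y_kJ_{2n_k})^T$.

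The key observation is that a Hamiltonian square root of $\operatorname{diag}(B,B^T)$ can be built even when $B$ has a negative real eigenvalue, precisely because the two copies $B$ and $B^T$ give us room to place a \emph{pure imaginary} square root in a way that still leaves the product real. Concretely, for the pairing $\operatorname{diag}(B,B^T)$ one can look for a square root of the off-diagonal ``swap" type or, more simply, take $Y_k=\begin{bmatrix}0 & R_k\\ S_k & 0\end{bmatrix}$ with $R_kS_k=B$ and $S_kR_k=B^T$; choosing $S_k=R_k^T$ forces $R_kR_k^T=B$ and $R_k^TR_k=B^T$, which is solvable whenever $B$ has a suitable factorization, and this $Y_k$ is manifestly Hamiltonian since its $(1,1)$ block is zero, its $(1,2)$ block $R_k$ and $(2,1)$ block $R_k^T$ are transposes of each other — matching the pattern $\begin{bmatrix}A & G\\ F & -A^T\end{bmatrix}$ with $A=0$, $G=R_k$, $F=R_k^T$, which requires $G=G^T$ and $F=F^T$. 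That symmetry constraint is the delicate point and shows the naive off-diagonal ansatz needs refinement; the cleaner route, which I would actually follow, is to use the square roots $X_j$ of $J_R$ already constructed in Theorem \ref{theorem1} (including the pure imaginary ones), form $Y=\Psi\begin{bmatrix}0 & X_j\\ X_j & 0\end{bmatrix}\Psi^{-1}$ or an appropriate real combination, and verify directly that $Y^2=W$ and $YJ=(YJ)^T$ using $W=\Psi\operatorname{diag}(J_R,J_R^T)\Psi^{-1}$ together with the identity $J_R^T = $ (transpose of the real Jordan form), so that an off-diagonal coupling of $X_j$ with $(X_j)^T$ produces the required symmetry automatically.

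The main obstacle I expect is the sign/reality bookkeeping for real Jordan blocks $J_{n_k}(\lambda_k)$ with $\lambda_k<0$: the ordinary square roots $L_{n_k}^{(i)}$ are then pure imaginary (Corollary \ref{coro}), so a diagonal Hamiltonian square root of $\operatorname{diag}(J_R,J_R^T)$ of the form $\operatorname{diag}(X,X^T)$ is not real for such blocks, and one genuinely must use an off-diagonal coupling $\begin{bmatrix}0 & X\\ \pm X^{\sharp} & 0\end{bmatrix}$ where $X^{\sharp}$ is chosen (e.g.\ $X^{\sharp}=-X$ or a transpose) so that the product of the pure imaginary pieces comes out real and equal to the real blocks $J_{n_k}(\lambda_k)$ and $J_{n_k}(\lambda_k)^T$. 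Making this precise while simultaneously meeting the symmetry requirements $G=G^T$, $F=F^T$ of Definition \ref{Defn1} is the crux; once the block-diagonal real Hamiltonian square root $Y_0$ of $\operatorname{diag}(J_R,J_R^T)$ is in hand, conjugating by the symplectic $\Psi$ finishes the proof, since $\Psi Y_0\Psi^{-1}$ is real, squares to $W$, and is Hamiltonian because symplectic similarity preserves the Hamiltonian class. A secondary, purely technical, point is confirming $\Psi Y_0 \Psi^{-1}$ stays real even though $\Psi$ itself is only guaranteed real by Lemma \ref{lemma3}; this is immediate since $\Psi$, $Y_0$ are both real.
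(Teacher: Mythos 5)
Your overall strategy -- reduce $W$ by the symplectic similarity of Lemma \ref{lemma3} to $\operatorname{diag}(J_R,J_R^T)$, build a real Hamiltonian square root of that matrix block by block, and conjugate back by $\Psi$ (symplectic similarity preserving the Hamiltonian class) -- is exactly the route of the construction the paper relies on (it cites \cite[Theorem 2]{FasMMX99}, and the same construction is carried out explicitly in the proof of Theorem \ref{theorem4}). But your write-up stops precisely where the real work is: you name the symmetry requirements $G=G^T$, $F=F^T$ of Definition \ref{Defn1} and the blocks with real negative eigenvalues as ``the crux'' and then leave them unresolved, so the proof is incomplete. Moreover, the concrete ansätze you float do not work. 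The coupling $\bigl[\begin{smallmatrix}0 & X_j\\ X_j & 0\end{smallmatrix}\bigr]$ squares to $\operatorname{diag}(X_j^2,X_j^2)=\operatorname{diag}(J_R,J_R)$, not $\operatorname{diag}(J_R,J_R^T)$, and is Hamiltonian only if $X_j$ is symmetric; the variant with $X_j^T$ in the $(2,1)$ block squares to $\operatorname{diag}(-X_jX_j^T,-X_j^TX_j)$ (or $\operatorname{diag}(X_jX_j^T,X_j^TX_j)$ depending on sign), again not the target, and its off-diagonal blocks are not symmetric; and demanding $R_kR_k^T=B$ is impossible when $B$ is a nonsymmetric Jordan block, since $R_kR_k^T$ is symmetric. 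Also note that $\operatorname{diag}(X,X^T)$, which you mention, is skew-Hamiltonian, not Hamiltonian; the correct diagonal form for the blocks without negative real eigenvalues is $\operatorname{diag}(X,-X^T)$ with $X$ a \emph{real} square root of the block.

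The missing idea is the reversal-matrix trick used in the proof of Theorem \ref{theorem4}. For a block $J_{n_k}(\lambda_k)$ with $\lambda_k<0$, write its pure imaginary square roots as $\pm iM_{n_k}$ with $M_{n_k}$ real upper triangular Toeplitz (Corollary \ref{coro}), and let $P_{n_k}$ be the reversal matrix. Because $M_{n_k}$ is Toeplitz, $P_{n_k}M_{n_k}P_{n_k}=M_{n_k}^T$, so both $M_{n_k}P_{n_k}$ and $P_{n_k}M_{n_k}$ are real \emph{symmetric}, and
\begin{equation*}
\begin{bmatrix} 0 & M_{n_k}P_{n_k}\\ -P_{n_k}M_{n_k} & 0\end{bmatrix}^2
=\begin{bmatrix} -M_{n_k}^2 & 0\\ 0 & -P_{n_k}M_{n_k}^2P_{n_k}\end{bmatrix}
=\begin{bmatrix} J_{n_k} & 0\\ 0 & J_{n_k}^T\end{bmatrix},
\end{equation*}
with the left-hand matrix real and Hamiltonian (zero diagonal blocks, symmetric off-diagonal blocks). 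Assembling these $2\times 2$ block pieces for the negative-eigenvalue Jordan blocks together with $\operatorname{diag}(X,-X^T)$ pieces for the remaining blocks, after a symplectic permutation into the $\bigl[\begin{smallmatrix}\widehat X_j & K_j\\ \widehat K_j & -\widehat X_j^T\end{smallmatrix}\bigr]$ pattern of Theorem \ref{theorem4}, gives a real Hamiltonian square root of $\operatorname{diag}(J_R,J_R^T)$; conjugating by $\Psi$ then finishes the argument as you intended. Without this explicit construction your proposal establishes only the easy reductions, not the lemma itself.
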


The proof is constructive and the key step is based in Lemma \ref{lemma3} -
we can bring $W$ into a real skew-Hamiltonian Jordan form (\ref{skewJordan})
via a sympletic similarity. Further, it is shown that every skew-Hamiltonian matrix
$W$ has infinitely many real Hamiltonian square roots.

The following theorem gives the structure of those real Hamiltonian
square roots.

\begin{theorem}
\label{theorem4} Let $W\in\matR^{2n\times 2n}$ be a nonsingular skew-Hamiltonian matrix and assume the conditions in
{\rm Theorem \ref{Theorem2}.}
\begin{enumerate}
\item[{(1)}] If $W$ has
no real negative eigenvalues, then $W$ has real Hamiltonian square roots which are not functions of $W$ and they form
$2^{p+q}$ parameterized families given by
\begin{equation}  \label{fcsshamilsqrt}
Y_j(\Theta)=\Psi\Theta \diag\left(X_j, {-X_j}^T\right)\Theta^{-1}\Psi^{-1},\quad j=1,\ldots,2^{p+q},
\end{equation}
where $X_j$ denotes a real square root of $J_R$ and $\Theta$ is an
arbitrary nonsingular symplectic matrix which commutes with
$\diag\left(J_R, J_R^T\right)$.
\item[{(2)}] If $W$ has some real negative eigenvalues, then $W$ has real Hamiltonian square roots which are not functions of $W$ and they form
$2^{p+q}$ parameterized families given by
\begin{equation}  \label{fcsshamilsqrt2}
Y_j(\Theta)=\Psi\Theta
\begin{bmatrix}
\widehat{X}_j & K_j\\
 \widehat{K}_j&-\widehat{X}_j^T
\end{bmatrix}
\Theta^{-1}\Psi^{-1},\quad j=1,\ldots,2^{p+q},
\end{equation}
where $\widehat{X}_j$ is a square root for the Jordan blocks of $J_R$ which are not associated with real negative eigenvalues,
$K_j$ and $\widehat{K}_j$ are symmetric block diagonal matrices corresponding to the square roots of the real negative eigenvalues, and
$\Theta$ is an arbitrary nonsingular symplectic matrix which commutes with
$\diag\left(J_R, J_R^T\right)$.
\end{enumerate}
\end{theorem}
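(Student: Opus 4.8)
The plan is to reduce the Hamiltonian case to the skew-Hamiltonian analysis already carried out, exactly as in the proof of Theorem \ref{Theorem2}, but now insisting that the conjugating factor be \emph{symplectic} rather than merely nonsingular, and that the middle factor have the block-structure $\diag(X,-X^T)$ (or the more general $2\times 2$ block form) that makes it Hamiltonian rather than skew-Hamiltonian. First I would record the elementary structural fact: if $N\in\matR^{2n\times 2n}$ has the block form $\bigl[\begin{smallmatrix}M&0\\0&-M^T\end{smallmatrix}\bigr]$ then $N$ is Hamiltonian (it satisfies $NJ=(NJ)^T$ since $G=F=0$ are trivially symmetric), and if additionally $N^2=\diag(J_R,J_R^T)$ — which happens precisely when $M^2=J_R$ — then $\Psi\,\Theta N\Theta^{-1}\Psi^{-1}$ is Hamiltonian whenever $\Psi\Theta$ is symplectic, because Hamiltonian structure is preserved under symplectic similarity (as noted in Section \ref{HamiltSkew}). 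Since $\Psi$ is symplectic by Lemma \ref{lemma3}, it suffices that $\Theta$ be symplectic; this is the origin of the restriction imposed on $\Theta$ in the statement.

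Next I would verify that the product $\bigl(\Psi\Theta N\Theta^{-1}\Psi^{-1}\bigr)^2 = \Psi\Theta N^2\Theta^{-1}\Psi^{-1} = \Psi\,\diag(J_R,J_R^T)\,\Psi^{-1} = W$ exactly as in the computation displayed in the proof of Theorem \ref{Theorem2}, provided $\Theta$ commutes with $\diag(J_R,J_R^T)$ — that commutation is what lets $\Theta N\Theta^{-1}$ still be a square root of $\diag(J_R,J_R^T)$. For part (1), when $W$ has no real negative eigenvalue, Corollary \ref{coro3}(2) (equivalently Theorem \ref{theorem1} together with Corollary following it) guarantees the existence of $2^{c+r}$ \emph{real} square roots $X_j$ of $J_R$, obtained by choosing for each real Jordan block one of its two real (rather than pure imaginary) square roots and for each real-Jordan-block-pair one of its $2^{\,?}$ square roots; setting $M=X_j$ real makes $\diag(X_j,-X_j^T)$ a real Hamiltonian matrix, and ranging $\Theta$ over the symplectic matrices commuting with $\diag(J_R,J_R^T)$ produces the announced $2^{p+q}$ parametrized families (the count $2^{p+q}$ being the number of independent sign/branch choices among the $p$ real-Jordan-block and $q$ real-eigenvalue factors that yield \emph{real} square roots, matching $2^{c+r}$ after identifying coincident eigenvalues — I would make this bookkeeping explicit using the constraint $j_l=j_k$, $i_l=i_k$ when $\lambda_l=\lambda_k$). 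Finally, invoking Lemma \ref{lemma4} (or rather its proof in \cite{FasMMX99}) confirms that at least one such symplectic $\Theta$ exists, so the families are nonempty, and none of these square roots is a function of $W$ because $\diag(X_j,-X_j^T)$ is never of the form $\diag(Y,Y^T)$ demanded by \eqref{cshamilsqrt} — its eigenvalues are not those of a skew-Hamiltonian matrix, so by Theorem \ref{Theorem02} it lies in the non-function families.

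For part (2), when $W$ does have real negative eigenvalues, Corollary \ref{coro} tells us the corresponding canonical Jordan blocks have only \emph{pure imaginary} square roots $iN_k$ with $N_k$ real, so a block-diagonal real square root of $J_R$ no longer exists; instead I would assemble the Hamiltonian square root from its $2\times 2$ block structure directly. Writing $J_R = \diag(J_R^{+}, J_R^{-})$ where $J_R^{-}$ collects the Jordan blocks at negative real eigenvalues and $J_R^{+}$ the rest, one takes $\widehat X_j$ a real square root of $J_R^{+}$ (available by part (1)'s reasoning) and for $J_R^{-}$ one uses that a real Hamiltonian square root of $\diag(J_R^{-},(J_R^{-})^T)$ exists of the form $\bigl[\begin{smallmatrix}0&K\\ \widehat K&0\end{smallmatrix}\bigr]$ with $K=K^T$, $\widehat K=\widehat K^T$ symmetric block-diagonal and $K\widehat K = J_R^{-}$, $\widehat K K = (J_R^{-})^T$ — this is exactly the device used in the constructive proof of Lemma \ref{lemma4} in \cite{FasMMX99}, where pairing a negative eigenvalue block with its transpose copy via an antidiagonal symmetric block yields a real (Hamiltonian) square root even though no real square root of the block alone exists. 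Combining, the middle factor $\bigl[\begin{smallmatrix}\widehat X_j & K_j\\ \widehat K_j & -\widehat X_j^T\end{smallmatrix}\bigr]$ is real Hamiltonian (symmetry of $K_j,\widehat K_j$ gives the off-diagonal symmetry conditions) and squares to $\diag(J_R,J_R^T)$ after suitably interleaving the $J_R^{+}$ and $J_R^{-}$ parts; conjugating by $\Psi\Theta$ with $\Theta$ symplectic commuting with $\diag(J_R,J_R^T)$ gives the $2^{p+q}$ families, and none is a function of $W$ since, as already remarked after Theorem \ref{Theorem2}, $W$ always has strictly fewer distinct eigenvalues than the number $2(2p+q)$ of canonical blocks, forcing every Hamiltonian (hence non-skew-Hamiltonian) square root into a non-function family. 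The main obstacle I anticipate is the part (2) bookkeeping: one must check carefully that the antidiagonal symmetric blocks $K_j,\widehat K_j$ produced by the $\cite{FasMMX99}$ construction can be chosen to commute, together with $\widehat X_j$, with the relevant part of $\diag(J_R,J_R^T)$ so that the $\Theta$-conjugation genuinely parametrizes distinct families of the claimed cardinality $2^{p+q}$, and that the interleaving permutation needed to present $J_R = \diag(J_R^{+},J_R^{-})$ is symplectic so it can be absorbed into $\Psi$ — everything else is a routine transcription of the Theorem \ref{Theorem2} argument with ``nonsingular'' replaced by ``symplectic'' and ``$\diag(Y,Y^T)$'' replaced by its Hamiltonian counterpart.
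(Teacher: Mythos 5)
Your proposal is correct and follows essentially the same route as the paper: part (1) is treated as the Hamiltonian specialization of Theorem \ref{Theorem2}, using that $\diag\left(X_j,-X_j^T\right)$ is Hamiltonian and that symplectic similarity (with $\Psi$ and $\Theta$ symplectic, $\Theta$ commuting with $\diag\left(J_R,J_R^T\right)$) preserves this structure, and part (2) pairs each negative-eigenvalue Jordan block with its transpose copy through symmetric antidiagonal blocks exactly as in the paper. The only difference is that you defer the construction of $K_j,\widehat{K}_j$ to \cite{FasMMX99}, whereas the paper makes it explicit ($K_j$ built from $M_{n_k}P_{n_k}$ and $\widehat{K}_j$ from $-P_{n_k}M_{n_k}$, with $P_{n_k}$ the reversal matrix and $\pm iM_{n_k}$ the pure imaginary square roots of $J_{n_k}$), and this explicit placement of the blocks inside $\widehat{X}_j$, $K_j$, $\widehat{K}_j$ also makes your anticipated worry about a symplectic interleaving permutation unnecessary.
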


\begin{proof}
Equation (\ref{fcsshamilsqrt}) is a special case of Equation
$(\ref{fcshamilsqrt})$ in Theorem \ref{Theorem2}. If $X_j$ is a real
square root of $J_R$ then $\diag\left(X_j,-X_j^T\right)$ is a
Hamiltonian square root of $\diag\left(J_R,J_R^T\right)$ and
Hamiltonian structure is preserved under symplectic similarity
transformations. There are $2^{p+q}$ real square roots of $J_R$
which may be or not functions of $J_R$.

For the second part, assume that $J_R$ in (\ref{rjordanm}),
\begin{equation*}
J_R={\begin{bmatrix}
C_{n_1}(a_1, b_1) &  &  &  &  &  \\
& \ddots &  &  &  &  \\
&  & C_{n_p}(a_p,b_p) &  &  &  \\
&  &  & J_{n_{p+1}}(\lambda_{p+1}) &  &  \\
&  &  &  & \ddots &  \\
&  &  &  &  & J_{n_{p+q}}(\lambda_{p+q})%
\end{bmatrix}},
\end{equation*}
has only one real negative eigenvalue, say $\lambda_k<0$, $k>p$ corresponding to the real Jordan block $J_{n_k}$.

Let $\pm iM_{n_k}$ with $M_{n_k}\in \matR^{n\times n}$ be the two pure imaginary square roots of $J_{n_k}$,
which are upper triangular Toeplitz matrices. See Corollary \ref{coro} and (\ref{JordanBlockSqr}).
Observe that $(\pm iM_{n_k})^2=-M_{n_k}^2=J_{n_k}$.
We will first construct a square root of $\diag\left(J_{n_k},J_{n_k}^T\right)$ which is real and Hamiltonian.
Let $P_{n_k}$ be the reversal matrix of order $n_k$
which satisfies $P_{n_k}^2=I$
(the anti-diagonal entries are all 1's, the only nonzero entries).
The matrices $P_{n_k}M_{n_k}$ and $M_{n_k}P_{n_k}$ are real symmetric
and we have
\[
\begin{bmatrix}
& M_{n_k}P_{n_k}\\
-P_{n_k}M_{n_k}&
\end{bmatrix}^2=
\begin{bmatrix}
-M_{n_k}^2& \\
&-P_{n_k}M_{n_k}^2P_{n_k}
\end{bmatrix}=
\begin{bmatrix}
J_{n_k} \\
& J_{n_k}^T
\end{bmatrix}.
\]
Thus,
\[
\begin{bmatrix}
& M_{n_k}P_{n_k}\\
-P_{n_k}M_{n_k}&
\end{bmatrix} \qquad ( \text{ and also }
\begin{bmatrix}
& -M_{n_k}P_{n_k}\\
P_{n_k}M_{n_k}&
\end{bmatrix})
\]
is a real Hamiltonian square root of
$\diag\left(J_{n_k},J_{n_k}^T\right)$ which is not a function
of
\break $\diag\left(J_{n_k},J_{n_k}^T\right)$.

If $X_1=\diag(F_{n_1},\ldots,F_{n_{p}})$ is a real square root of $\diag(C_{n_1},\ldots,C_{n_{p}})$,\break
$X_2=\diag(L_{n_{p+1}},\ldots,L_{n_{k-1}})$ is a real square root of $\diag(J_{n_{p+1}},\ldots,J_{n_{k-1}})$
and $X_3=\diag(L_{n_{k+1}},\ldots,L_{n_{p+q}})$ is a real square root of $\diag(J_{n_{k+1}},\ldots,J_{n_{p+q}})$, then
\[
\begin{pmat}[{...|..}]
X_1& \cr
    &X_2 \cr
    &      & O & &      & & M_{n_k}P_{n_k}\cr
    &      & & X_3 \cr\-
    &      & & &  -X_1^T \cr
    &      & & &  & -X_2^T \cr
    &      & -P_{n_k}M_{n_k}& &      & & O \cr
    &      & & &      & & & -X_3^T\cr
\end{pmat}
=:
\begin{bmatrix}
\widehat{X}_j & K_j\\
 \widehat{K}_j&-\widehat{X}_j^T
\end{bmatrix}
\]
is a real Hamiltonian square root of $\diag\left(J_R, J_R^T\right)$.
Notice that there are $2^{p+q}$ different square roots with this form.
Thus, for an arbitrary nonsingular symplectic matrix
$\Theta$ which commutes with
$\diag\left(J_R, J_R^T\right)$,
\begin{equation*}
Y_j(\Theta)=\Psi\Theta
\begin{bmatrix}
\widehat{X}_j & K_j\\
 \widehat{K}_j&-\widehat{X}_j^T
\end{bmatrix}
\Theta^{-1}\Psi^{-1},\quad j=1,\ldots,2^{p+q},
\end{equation*}
is a Hamiltonian square root of $W$.

If $W$ has more than one real negative eigenvalue, the generalization is straightforward.
\end{proof}

\section{Algorithms for computing square roots of a skew-Hamiltonian matrix}
\label{AlgorithmSkew}
In this section we will present a structure-exploiting Schur method to compute a real skew-Hamiltonian or a real Hamiltonian square root of a real skew-Hamiltonian matrix $W\in \matR^{2n\times 2n}$ when $W$ does not have real negative eigenvalues.

\subsection{Skew-Hamiltonian square roots}
First we obtain the $PVL$ decomposition of $W \in \matR^{n\times n}$ described in section \ref{HamiltSkew},
\[
U^TWU=\begin{bmatrix}
W_1 & W_2 \\
O & W_1^T
\end{bmatrix},\quad W_2^T=-W_2,
\]
where $U$ is symplectic-orthogonal and $W_1$ is upper Hessenberg.
The matrix $U$ is constructed as a product of elementary symplectic-orthogonal matrices. These are the $2n\times 2n$ Givens rotations matrices of the type
\[
\begin{bmatrix}
I_{j-1}\\
&\cos \theta &    & \sin \theta \\
&            & I_{n-1}\\
            &-\sin \theta & & \cos\theta\\
            &             & &           &I_{n-j}
\end{bmatrix}, \quad 1 \leq j \leq n,
\]
for some angle $\theta \in [-\pi/2,\pi/2[$, and the direct sum of two identical $n\times n$ Householder
matrices
\[
H_j\oplus H_j (\boldsymbol{\upsilon},\beta)=
\begin{bmatrix}
I_n -\beta \boldsymbol{\upsilon} \boldsymbol{\upsilon}^T\\
& I_n -\beta \boldsymbol{\upsilon} \boldsymbol{\upsilon}^T
\end{bmatrix},
\]
where $\boldsymbol{\upsilon}$ is a vector of length $n$ with its first $j-1$ elements equal to zero.
A simple combination of these transformations
can be used to zero out entries in $W$ to accomplish the PVL form.
See Algorithm 1 and Algorithm 5 in \cite[pp. 4,10]{Benner}.
The product of the transformations used in the reductions is accumulated to form the matrix $U$.

Then the standard QR algorithm is applied to $W_1$ producing
an orthogonal matrix $Q$ and a quasi-upper triangular matrix $N_1$ in real Schur form (\ref{RealSchur}) so that
\[
W_1=QN_1Q^T,
\]
and we attain the real skew-Hamiltonian Schur decomposition of $W$,
\begin{equation*}
{\cal{T}}={\cal U}^TW{\cal U}=\begin{bmatrix}
N_1 & N_2 \\
O & N_1^T
\end{bmatrix}, \quad N_2=-N_2^T,
\end{equation*}
where
$
{\cal U}=U \begin{bmatrix} Q & O\\
O & Q
\end{bmatrix}
$
and $N_2=Q^TW_2Q$.

This procedure takes only approximately a $20\%$ of the computational cost the standard $QR$ algorithm
would require to compute the unstructured real Schur decompositon of $W$ \cite[p. 10]{Benner}.

Let
$$
Z=\begin{bmatrix}
X & Y \\
  & X^T
\end{bmatrix}, \qquad Y=-Y^T.
$$
be a skew-Hamiltonian square root of $\cal{T}$. We can solve the equation $Z^2=\cal{T}$ exploiting the structure.
From
$$
\begin{bmatrix}
X & Y \\
  & X^T
\end{bmatrix}
\cdot
\begin{bmatrix}
X & Y \\
  & X^T
\end{bmatrix}=
\begin{bmatrix}
X^2 & XY+YX^T\\
0      & \left(X^T\right)^2
\end{bmatrix}=
\begin{bmatrix}
N_{1} & N_{2}\\
0      & N_{1}^T
\end{bmatrix},
$$
we have
\begin{equation}
\label{Alg2Eq1}
X^2=N_{1}
\end{equation}
and
\begin{equation}
\label{Alg2Eq2}
XY+YX^T=N_{2}.
\end{equation}
Equation (\ref{Alg2Eq1}) can be solved using Higham's real Schur method
(see Algorithm 1, Section \ref{SectionRealShur}, page \pageref{pageRealSchur}) and
it is not difficult to show that $X$ inherits $N_1$'s quasi-upper triangular structure.
Equation (\ref{Alg2Eq2}) is a Lyapunov equation which can be solved efficiently since $X$
is already in quasi-upper triangular real Schur form and $Y$ is skew-symmetric.
The techniques are the same as for the Sylvester equation. See \cite[Chapter 16]{Bartels,Highbook0}.

If the partitions of $X=(X_{ij})$, $Y=(Y_{ij})$ and $N_{2}=(N_{ij})$ are conformal with $N_{1}$ block structure,
\begin{align*}
X&=\begin{bmatrix}
X_{11} & X_{12} & \ldots & X_{1m}\\
       & X_{22} & \ldots & X_{2m}\\
       &        & \ddots & \vdots\\
       &        &        & X_{mm}
\end{bmatrix}, \qquad
Y=\begin{bmatrix}
Y_{11}    & -Y_{21}^T    & \ldots  & -Y_{m1}^T\\
Y_{21} & Y_{22}    & \ldots  & -Y_{m2}^T\\
\vdots    &  \vdots   & \ddots  & \vdots\\
Y_{m1} & Y_{m2} & \ldots  & Y_{mm}
\end{bmatrix}, \quad \\\\
N_2&=\begin{bmatrix}
N_{11}    & -N_{21}^T    & \ldots  & N_{m1}^T\\
N_{21}  & N_{22}    & \ldots  & -N_{m2}^T\\
\vdots    &  \vdots   & \ddots  & \vdots\\
N_{m1}  & N_{m2} & \ldots  & N_{mm}
\end{bmatrix}, \qquad
\begin{array}{rr}
Y_{ii}=-Y_{ii}^T, \quad N_{ii}=-N_{ii}^T, \\
i=i,\ldots,m.
\end{array}
\end{align*}
then, from (\ref{Alg2Eq2}), we have
\begin{equation}
\sum_{k=i}^mX_{ik}Y_{kj}+\sum_{k=j}^mY_{ik}X_{jk}^T=N_{ij}
\end{equation}
and
\begin{align*}
X_{ii}Y_{ij}+Y_{ij}X_{jj}^T&=N_{ij}-\sum_{k=i+1}^mX_{ik}Y_{kj}-\sum_{k=j+1}^mY_{ik}X_{jk}^T\\
&=N_{ij}-\sum_{k=i+1}^mX_{ik}Y_{kj}-\sum_{k=j+1}^mY_{ik}X_{jk}^T.\\
\end{align*}
These equations may be solved successively for $Y_{mm}, Y_{m,m-1}, \ldots, Y_{m1}$,\break
$Y_{m-1,m-1}, Y_{m-1,m-2}, \ldots, Y_{m-1,1}$, \ldots, $Y_{22}, Y_{21}$ and $Y_{11}$.
We have to solve
\begin{align}
\nonumber
&X_{ii}Y_{ij}+Y_{ij}X_{jj}^T=N_{ij}-\sum_{k=i+1}^mX_{ik}Y_{kj}-\sum_{k=j+1}^iY_{ik}X_{jk}^T+\sum_{k=i+1}^mY_{ki}^TX_{jk}^T,\\
\label{SylvEq}
&\qquad  i=m,m-1,\ldots,1\\
\nonumber
&\qquad  j=i,i-1,\ldots,1.
\end{align}
Since $X_{ii}$ are of order 1 or 2, each system (\ref{SylvEq}) is a linear system of order 1,2 or 4 and
is usually solved by Gaussian elimination with complete pivoting.
The solution is unique because $X_{ii}$ and $-X_{jj}^T$ have no eigenvalues in common.
See Section \ref{SectionExistence}.



\textbf{Algorithm 2 [Skew-Hamiltonian real Schur method]}

\medskip

\begin{enumerate}

\item[1.] compute a real skew-Hamiltonian Schur decomposition of $W$,
$$
{\cal T}={\cal U}^TW{\cal U}=\begin{bmatrix}
N_{1} & N_{2}\\
0      & N_{1}^T
\end{bmatrix};
$$

\medskip

\item[2.] use Algorithm 1 to compute a square root $X$ of $N_{1}$, $X^2=N_{1}$;

\medskip

\item[3.] solve the Sylvester equation $XY+YX^T=N_{2}$ using (\ref{SylvEq}) and form
$$
Z=\begin{bmatrix}
X & Y \\
  & X^T
\end{bmatrix};
$$

\smallskip

\item[4.] obtain the skew-Hamiltonian square root of $N$, ${\cal X}={\cal{U}}Z{\cal{U}}^T$.

\end{enumerate}

The cost of the real skew-Hamiltonian Schur method for $W\in \matR^{2n\times 2n}$ is measured in flops as follows.
The real skew-Hamiltonian Schur factorization of $W$ costs about $3(2n)^3$ flops \cite{GolVL96,Benner}.
The computation of $X$ requires $n^3/6$ flops, the computation of the skew-symmetric solution $Y$ requires
about $n^3$ flops \cite[p. 368]{GolVL96}
and the formation of ${\cal X}={\cal{U}}Z{\cal{U}}^T$ requires $3(2n)^3/2$ flops.
The total cost is approximately $5(2n)^3$ flops. Comparing with
the overall cost of Algorithm 1, the unstructured real Schur method, which is about $17\times (2n)^3$ flops, Algorithm 2
requires considerably less floating point operations.

\subsection{Hamiltonian square roots}
Analogously, let {$Z$ be a Hamiltonian square root} of $\cal{T}$,
$$
{Z=\begin{bmatrix}
X & Y \\
  & -X^T
\end{bmatrix}, \qquad Y=Y^T}
$$
(which is not a function of $\cal{T}$). To solve the equation {$Z^2=\cal{T}$}, observe that,
from
$$
{\begin{bmatrix}
X & Y \\
  & -X^T
\end{bmatrix}
\cdot
\begin{bmatrix}
X & Y \\
  & -X^T
\end{bmatrix}=
\begin{bmatrix}
X^2 & XY-YX^T\\
0      & \left(X^T\right)^2
\end{bmatrix}=
\begin{bmatrix}
N_{1} & N_{2}\\
0      & N_{1}^T
\end{bmatrix}}
$$
it follows
\begin{equation}
\label{Alg3Equ1}
{X^2=N_{1}}
\end{equation}
and
\begin{equation}
\label{Alg3Equ2}
{XY-YX^T=N_{2}}.
\end{equation}
Equation (\ref{Alg3Equ1}) can be solved using Higham's real Schur method and
Equation (\ref{Alg3Equ2}) is a {singular Sylvester equation} with infinitely many symmetric solutions.
See \cite[Proposition 7]{FasMMX99}. Again, the structure can be exploited and we have to solve
\begin{align}
\nonumber
&X_{ii}Y_{ij}-Y_{ij}X_{jj}^T=N_{ij}-\sum_{k=i+1}^mX_{ik}Y_{kj}+\sum_{k=j+1}^iY_{ik}X_{jk}^T-\sum_{k=i+1}^mY_{ki}^TX_{jk}^T,\\
\label{SylvEq2}
&\qquad  i=m,m-1,\ldots,1\\
\nonumber
&\qquad  j=i,i-1,\ldots,1.
\end{align}
The solution of the linear system (\ref{SylvEq2}) may not be unique but it always exists.


\bigskip

{\textbf{Algorithm 3 [Hamiltonian real Schur method]}}

\medskip

\begin{enumerate}

\item[1.] compute a real skew-Hamiltonian Schur decomposition of $W$,
$$
{{\cal T}={\cal U}^TW{\cal U}=\begin{bmatrix}
N_{1} & N_{2}\\
0      & N_{1}^T
\end{bmatrix};}
$$

\medskip

\item[2.] use Algorithm 1 to compute a square root $X$ of $N_{1}$, {$X^2=N_{1}$;}

\medskip

\item[3.] obtain one solution for the Sylvester equation {$XY-YX^T=N_{2}$} using (\ref{SylvEq2})
and form
$$
{Z=\begin{bmatrix}
X & Y \\
  & -X^T
\end{bmatrix};}
$$
\smallskip

\item[4.] obtain the {Hamiltonian square root} of $W$, {${\cal X}={\cal U}Z{\cal{U}}^T$.}

\end{enumerate}

\section{Numerical examples} \label{sectionExamples}

We implemented Algorithms 2 and 3 in \textsc{Matlab} 7.5.0342 (R2007b) and
used the Matrix Function Toolbox by Nick Higham available in Matlab
Central website
\texttt{{http://www.mathworks.com/matlabcentral}}. To find the square
root $X$ in step 2 we used the function \textsf{sqrtm\_real} of this toolbox
and to solve the linear systems (\ref{SylvEq}) in step 3 of Algorithm 2 we
used the function \textsf{sylvsol} (the solution is always unique). In step
3 of Algorithm 3 the linear systems (\ref{SylvEq2}) are solved using
Matlab's function \textsf{pinv} which produces the solution with the
smallest norm when the system has infinitely many solutions.

\smallskip

Let {$\bar{\mathcal{X}}$ be an approximation} to a square root of
$W$ and define the \textsl{residual}
\begin{equation*}
{E=\bar{\mathcal{X}}^{2}-W.}
\end{equation*}%
Then, we have ${\bar{\mathcal{X}}^{2}=W}$ $+{E}$ and, as observed by Higham
\cite[p. 418]{Hig97}, the stability of an algorithm for computing a square
root {$\bar{\mathcal{X}}$ }$\ $of ${W}$ corresponds to the residual $E$
being small relative to ${W}$. Furthermore, for {$\bar{\mathcal{X}}$ }$\ $%
computed with \textsf{sqrtm\_real,} Higham gives the following error bound

\begin{equation*}
\frac{\Vert {E}\Vert _{F}}{\Vert W\Vert _{F}}\leq \left( 1+cn\frac{\Vert
\bar{\mathcal{X}}\Vert _{F}^{2}}{\Vert W\Vert _{F}}\right) u
\end{equation*}%
where ${\Vert \cdot \Vert }_{F}$ is the Frobenius norm, $c$ is a constant of
order 1, $n$ is the dimension of $W$ and $u$ is the roundoff unit.
Therefore, the real Schur method is stable provided that the number%
\begin{equation*}
\alpha ({\mathcal{X})=}\frac{\Vert \bar{\mathcal{X}}\Vert _{F}^{2}}{\Vert
W\Vert _{F}}
\end{equation*}
is small.

We expect our structure-preserving algorithms, Algorithm 2 (skew-Hamiltonian
square root) and Algorithm 3 (Hamiltonian square root) to be as accurate as
Algorithm 1 (real Schur method) which ignores the structure. The numerical
examples that follow illustrate that the three algorithms are all quite
accurate when $\alpha ({\mathcal{X})}$ is small.

\begin{example}
The skew-Hamiltonian matrix
\begin{align*}
{W=%
\begin{bmatrix}
\boldsymbol{e}\boldsymbol{e}^T & A \\
-A^T & \boldsymbol{e}\boldsymbol{e}^T%
\end{bmatrix}%
, \qquad A=%
\begin{bmatrix}
0 & 10^{-6} & 1 & 0 & 0 \\
-10^{-6} & 0 & 1 & 10^{-6} & 0 \\
-1 & -1 & 0 & 10^{-6} & 1 \\
0 & -10^{-6} & -10^{-6} & 0 & 1 \\
0 & 0 & {-1} & {-1} & 0%
\end{bmatrix}%
,}
\end{align*}
where $\boldsymbol{e}$ is the vector of all ones, has one complex conjugate
eigenvalue pair and 3 positive real eigenvalues (all with multiplicity 2).

The relative residuals of both the skew-Hamiltonian and Hamiltonian square
roots computed with {\rm Algorithm 2} and {\rm Algorithm 3} are $4\times 10^{-15}$, the
same as for the square root delivered by {\rm Algorithm 1.}
\end{example}


\begin{example}
The eigenvalues of the skew-Hamiltonian matrix
\begin{align*}
W=%
\begin{bmatrix}
A & B \\
B & A^T%
\end{bmatrix}%
, \qquad & A=%
\begin{bmatrix}
0 & -10^{-6} & 0 & 0 \\
10^{-6} & 0 & 0 & 0 \\
0 & 0 & 0 & 10^{-6} \\
0 & 0 & -10^{-6} & 0%
\end{bmatrix}%
, \\
& B=%
\begin{bmatrix}
0 & 1 & 2 & 3 \\
-1 & 0 & 2 & 3 \\
-2 & -2 & 0 & 3 \\
-3 & -3 & -3 & 0%
\end{bmatrix}%
,
\end{align*}
are all very close to pure imaginary (four distint eigenvalues).

The relative residuals of the square roots delivered by all the three
methods are {$4\times 10^{-16}$.}
\end{example}

If $W$ has negative real eigenvalues there are no real square roots which
are functions of $W$. However, all these algorithms can be applied and
complex square roots will be obtained. In step 2 of Algorithms 2 and 3 a
complex square root is computed and so we get a complex skew-Hamiltonian and
a complex Hamiltonian square-root.

\begin{example}

For random matrices $A$, $B$ and $C$ (values  drawn from a uniform
distribution on the unit interval), the computed square roots of the
skew-hamiltonian matrix  of order $2n=50$ (several cases)
\begin{equation*}
{W=%
\begin{bmatrix}
A & B-B^T \\
C-C^T & A^T%
\end{bmatrix}%
}
\end{equation*}
also have relative residuals of order at most $10^{-14}$.
\end{example}


\section{Conclusions}

Based in the real skew-Hamiltonian Jordan form, we gave a clear
characterization of the square roots of a real skew-Hamiltonian matrix $W$.
This includes those square roots which are functions of $W$ and those which
are not. Although the Jordan canonical form is the main theoretical tool in
our analysis, it is not suitable for numerical computation. We have designed
a method for the computation of square roots of such structured matrices.
An important component of our method is
the real Schur decomposition tailored for skew-Hamiltonian matrices, which has been
used by others in solving problems different from ours.

Our algorithm requires considerably less floating point operations (about
70\% less) than the general real Schur method due to Higham. Furthermore, in numerical experiments, our algorithm
has produced results which are as accurate as those obtained with \textsf{sqrtm\_real}.


\end{document}